\newtheorem{problem}{Problem}
\newtheorem{example}{Example}
\let\oldexample\example
\renewcommand{\example}{\oldexample\normalfont}
\newcommand{\inv}{^{-1}}
\newcommand{\x}{x^k}
\newcommand{\xx}{x^{k+1}}
\newcommand{\M}{\mathcal{M}}
\newcommand{\N}{\mathcal{N}}
\newcolumntype{d}[1]{D{.}{.}{#1}}
\newcommand\mc[1]{\multicolumn{1}{c}{#1}}
\title{A Riemannian Gradient Descent Method for the Least Squares Inverse Eigenvalue Problem}
\author{Alban Bloor Riley\footnote{Corresponding author.} \thanks{Department of Mathematics, The University of Manchester, Manchester M13 9PL, United Kingdom, \texttt{alban.bloorriley@manchester.ac.uk}, \texttt{marcus.webb@manchester.ac.uk}} \and Marcus Webb\footnotemark[2] \and Michael L.~Baker\thanks{The University of Manchester, Department of Chemistry, Manchester M13 9PL, United Kingdom, The University of Manchester at Harwell, Diamond Light Source, Harwell Campus, OX11 0DE, UK, \texttt{michael.baker@manchester.ac.uk}}}
\date{July 2024}
\begin{document}
\maketitle
\begin{abstract}
We address an algorithm for the least squares fitting of a subset of the eigenvalues of an unknown Hermitian matrix lying an an affine subspace, called the Lift and Projection (LP) method, due to Chen and Chu (SIAM Journal on Numerical Analysis, 33 (1996), pp.~2417--2430). The LP method iteratively `lifts' the current iterate onto the spectral constraint manifold then `projects' onto the solution's affine subspace. We prove that this is equivalent to a Riemannian Gradient Descent with respect to a natural Riemannian metric. This insight allows us to derive a more efficient implementation, analyse more precisely its global convergence properties, and naturally append additional constraints to the problem. We provide several numerical experiments to demonstrate the improvement in computation time, which can be more than an order of magnitude if the eigenvalue constraints are on the smallest eigenvalues, the largest eigenvalues, or the eigenvalues closest to a given number. These experiments include an inverse eigenvalue problem arising in Inelastic Neutron Scattering of Manganese-6, which requires the least squares fitting of 16 experimentally observed eigenvalues of a $32400\times32400$ sparse matrix from a 5-dimensional subspace of spin Hamiltonian matrices.
\end{abstract}
\begin{AMS}
  65F18 
, 65K10 
,  90C30 
, 49N45 
\end{AMS}
\begin{keywords}
Inverse Eigenvalue Problem, Riemannian Gradient Descent, Nonlinear Least Squares 
\end{keywords}

\section{Introduction} \label{sec. Introduction}
Let $A(x)$ be the affine family of matrices,
\begin{equation} \label{eqn: A(x)}
A( x) = A_0 + \sum^\ell_{i=1} x_i A_i,
\end{equation}
where $x\in\mathbb R^\ell$ and $A_0,\dots,A_\ell \in \mathbb R^{n\times n}$ are linearly independent symmetric matrices, and denote the ordered eigenvalues of $A(x)$ as $\lambda_1(x)\leq\dots\leq\lambda_n(x)$.
Then the least squares inverse eigenvalue problem (LSIEP) is given by:
\begin{problem}
[Least Squares Inverse Eigenvalue Problem \cite{chu_inverse_2005}]\label{prob: IEP}
Given real numbers $\lambda_1^*\leq\ldots\leq\lambda_m^*$, where $m \leq n$, find the parameters $x \in \mathbb R^\ell$ and the permutation $\rho$ $\in S_n$ that minimises
\begin{equation}\label{eqn: Objective function}
    F(x,\rho) = \frac 1 2 ||r(x,\rho)||^2_2 = \frac 1 2 \sum^m_{i=1}(\lambda_{\rho_i}( x) - \lambda_i^*)^2.
\end{equation} 
\end{problem}
The problem of finding a permutation $\rho$ can be considered subordinate to that of finding $x$, in the sense that for each parameter vector $x\in \mathbb{R}^\ell$, there is an essentially unique permutation that is optimal.
\begin{problem}[Eigenvalue Permutation Problem]\label{prob: rho}
Given real numbers $\lambda_1^*\leq\ldots\leq\lambda_m^*$, where $m \leq n$, and a parameter vector $x \in \mathbb R^\ell$, find a permutation $\rho \in S_n$ that minimises
\begin{equation} 
\sum^m_{i=1} \left(\lambda_{\rho_i}(x) - \lambda^*_i\right)^2.
\end{equation}
\end{problem}
This permutation minimises the difference between the prescribed eigenvalues, $\lambda_i^*$, and a subset of $m$ of the eigenvalues of $A(x)$. We will assume that such a permutation can be found, either by formulation as a linear sum assignment problem as in \cite[p.~203]{chu_inverse_2005}, or heuristically (see Section \ref{sec. rho}). Our focus for the remainder of the paper turns to the problem of iteratively finding the optimal $x$.

An important class of iterative methods is those of the form:   $\xx = \x+ p(\x)$ where each method is defined by the function $p$. A simple example is the gradient descent (steepest descent) algorithm, in which $p(x^k) = - \nabla F(x^k)$, where $\nabla F(x^k)$ is the gradient of $F$ at $x^k$. Given a fixed permutation $\rho \in S_n$, there is an explicit formula for the derivatives of the eigenvalues with respect to the parameters, commonly known as the Hellman--Feynman Theorem,
\begin{equation}
    \frac{\partial \lambda_{\rho_i}}{\partial x_j} = q_i(x)^T \frac{\partial A}{\partial x_j} q_i(x),
\end{equation}
where $q_i(x)$ is the eigenvector associated with the eigenvalue $\lambda_{\rho_i}(x)$ of $A(x)$. It can be proved simply by differentiating the relation $\lambda_{\rho_i} = q_i^T A q_i$. This gives us an explicit formula for $\nabla F$. Further, we can express it in the form $\nabla F = J_r(x)^Tr(x)$ where $J_r$ is the Jacobian matrix of $r(x)$, and
\begin{equation}\label{eqn. Jacobian}
    J_r(x) = \begin{pmatrix}
        q_1(x)^TA_1q_1(x)&\dots &q_1(x)^TA_\ell q_1(x)\\
        \vdots&\ddots&\vdots\\
        q_m(x)^TA_1q_m(x)&\dots& q_m(x)^TA_\ell q_m(x)
    \end{pmatrix}.
\end{equation}
  The cost of each of these iterations is relatively small as only the first derivative of $F$ is required, however the rate of convergence is merely linear. The Newton method \cite{deuflhard_newton_2011,nocedal_numerical_2006,dennis_numerical_1996}, where $p = -H_F\inv \nabla F$,  is another popular choice that offers locally quadratic convergence, but the cost of each iteration is higher as it requires  the calculation of the Hessian matrix, $H_F$. This method  may also require the modification of the Hessian away from a minimum if it is not positive definite. The Gauss-Newton method \cite{dennis_numerical_1996,nocedal_numerical_2006,bloor_riley_deflation_2025}, $p = -(J_r^TJ_r) \inv \nabla F$, is an alternative to the Newton method for least squares problems that does not require second derivatives but can enjoy locally quadratic convergence. Zhao, Jin and Yao have developed Riemannian Gauss--Newton and BFGS methods for the least squares inverse eigenvalue problem \cite{zhao_riemannian_2022, zhao_riemannian_2022-1}.

In this paper, we focus on the Lift and Projection (LP) method, due to Chen and Chu \cite{chen_least_1996, chu_inverse_2005}. This method, described in Section \ref{sec. LP}, iteratively `lifts' the current iterate onto the manifold of matrices with the constrained eigenvalues, then `projects' onto the affine subspace defined by $A(x)$. Given here as Algorithm \ref{alg. LP} we see that a full eigendecomposition is required to calculate $Z^k$, which makes this algorithm prohibitively slow for large matrices. Later in the section we prove Theorem \ref{Thrm. Alg equivalence}, which shows that Algorithm \ref{alg. LP} produces the same iterations as Algorithm \ref{alg. LP as GD}. In Section \ref{sec. Riemannian GD} we will introduce various tools from Riemannian geometry and ultimately prove the following theorem, which shows that the lift and projection method is actually a particular, and natural, example of a Riemannian gradient descent method (as described in Algorithm \ref{alg. RGD}):
\begin{theorem}\label{thrm. LP is RGD}
The lift and projection method is equivalent to the Riemannian gradient descent method on $\mathbb R^\ell $ equipped with the metric induced by $A(x)$ (in the sense of Definition \ref{def. Induced Metric}).
\end{theorem}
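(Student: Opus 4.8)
The plan is to collapse both methods to the same explicit recursion on $\mathbb{R}^\ell$ and observe that they coincide term by term. On the lift-and-projection side, Theorem~\ref{Thrm. Alg equivalence} already identifies the LP iteration with that of Algorithm~\ref{alg. LP as GD}, so it suffices to write that recursion out. The lift replaces $A(\x)$ by the Frobenius-nearest point of the spectral constraint manifold, namely $Z^k = A(\x) - \sum_{i=1}^m r_i(\x)\, q_i(\x) q_i(\x)^T$ (keep the eigenvectors of $A(\x)$, reset the matched eigenvalues to the $\lambda_i^*$), and the projection then solves the linear least squares problem $\min_x \|A(x) - Z^k\|_F$. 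Writing $x = \x + p$ and recalling from~\eqref{eqn. Jacobian} that $J_r(\x)$ has entries $q_i(\x)^T A_l q_i(\x)$, the normal equations read $G p + J_r(\x)^T r(\x) = 0$, and with $\nabla F = J_r^T r$ from the introduction this gives
\begin{equation*}
 \xx = \x - G\inv J_r(\x)^T r(\x) = \x - G\inv \nabla F(\x), \qquad G_{ij} := \langle A_i, A_j \rangle_F,
\end{equation*}
where the Gram matrix $G$ is positive definite because $A_1,\dots,A_\ell$ are linearly independent.

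On the Riemannian side I would start from Definition~\ref{def. Induced Metric}: the metric induced by $A$ is the pullback of the Frobenius inner product along the map $x \mapsto A(x)$. Since this map is affine, its differential $v \mapsto \sum_i v_i A_i$ is independent of $x$, so the metric tensor is exactly the constant matrix $G$ above, i.e.\ $g_x(u,v) = u^T G v$. The Riemannian gradient $\operatorname{grad}_g F(x)$ is the unique vector with $g_x(\operatorname{grad}_g F(x), v) = \nabla F(x)^T v$ for all $v$, hence $\operatorname{grad}_g F(x) = G\inv \nabla F(x)$. Finally, a metric with constant coefficients (in the standard coordinates on $\mathbb{R}^\ell$) has vanishing Christoffel symbols, so its geodesics are straight lines and its exponential map is $\exp_x(v) = x + v$, which is also the canonical additive retraction used for a vector space in Algorithm~\ref{alg. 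RGD}. Therefore one step of that algorithm, with unit step size, is $\xx = \exp_{\x}(-\operatorname{grad}_g F(\x)) = \x - G\inv \nabla F(\x)$, identical to the LP recursion above; and since the permutation $\rho$ enters only through $r$ and $J_r$ and is updated the same way in both schemes, the two methods generate the same sequence of iterates.

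The bulk of the argument is really bookkeeping, and the two places to be careful are: (i) checking against Definition~\ref{def. Induced Metric} that the induced metric is precisely this constant Gram-matrix metric; and (ii) matching the step-size and retraction conventions, i.e.\ verifying that Algorithm~\ref{alg. RGD}, specialised to $\mathbb{R}^\ell$ with this metric, uses step size one together with the additive retraction, and that this retraction agrees with the Riemannian exponential map because the metric is flat. A final subtlety, which I would flag rather than dwell on, is differentiability: the identity $\nabla F = J_r^T r$, and hence the whole equivalence, is valid on the open set where the $m$ matched eigenvalues of $A(x)$ are simple and the optimal permutation is locally constant --- the same regime in which the LP step and Theorem~\ref{Thrm. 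Alg equivalence} are stated.
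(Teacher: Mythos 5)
Your proposal is correct and follows essentially the same route as the paper: you compute the pullback metric of the Frobenius inner product along the affine map $A$, identify it with the constant Gram matrix $B$, conclude that the Riemannian gradient is $B^{-1}\nabla F$, and invoke Theorem~\ref{Thrm. Alg equivalence} to match this recursion with the lift-and-projection iterates. The only cosmetic difference is that the paper packages the gradient computation through Lemma~\ref{Lemma. induced gradient} (via the adjoint $DA(x)^*DA(x)=B$) rather than your direct defining identity, and your remarks on flatness and the exponential map are extra care beyond what Algorithm~\ref{alg. RGD} on $\mathbb{R}^\ell$ requires.
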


 The key advantage to this formulation is that only a partial eigendecomposition associated with the eigenvalues closest to $\lambda_1^*,\ldots,\lambda_m^*$ is required. Specialist eigensolvers such as FEAST \cite{polizzi_density-matrix-based_2009}, Jacobi-Davidson \cite{sleijpen_jacobi--davidson_2000} and Stewart's Krylov--Schur algorithm \cite{stewart_krylov--schur_2002} (used in MATLAB's \texttt{eigs} function) can be used to perform this. Even if, instead, a full eigendecomposition is computed, Algorithm \ref{alg. LP as GD} is cheaper to compute than Algorithm \ref{alg. LP} because the calculation of $Z^k$ is unnecessary.

Although these computational considerations motivated this work, it turns out that the reformulation of the LP method as an RGD method also allows more detailed analysis of the method. In Section \ref{sec. Global Convergence} we prove its global convergence properties, something already shown by Chen and Chu \cite{chen_least_1996}, with quantitative bounds on the decrease of the objective function.
\begin{theorem} \label{thrm. Global convergence of RGD LP}
    The Lift and Projection method satisfies
    \begin{equation}
        F(\xx) \leq F(\x) - \frac12 \nabla F(x^k)^T B^{-1} \nabla F(x^k),
    \end{equation}
    for all $k \geq 1$, where $F$ is the least squares objective as in Problem \ref{prob: IEP} and $B$ is the Gram matrix as in \eqref{eqn. Def B}. Hence, the method is strictly descending unless $\nabla F(x^k) = 0$.
\end{theorem}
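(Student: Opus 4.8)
The plan is to exploit the fact that the LP iteration is built from two nearest-point maps, so that $\|A(\x)-Z^k\|_F^2$ is squeezed between $2F(\x)$ and $2F(\xx)$, with the slack equal to the squared step length $\|A(\xx)-A(\x)\|_F^2$; evaluating that length via the Gram matrix $B$ and the Jacobian formula \eqref{eqn. Jacobian} then yields exactly the stated bound.

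Concretely, write $\M$ for the spectral constraint manifold (symmetric matrices carrying $\lambda_1^*,\dots,\lambda_m^*$ among their eigenvalues) and $\mathcal A=A_0+\operatorname{span}\{A_1,\dots,A_\ell\}$ for the affine subspace. I would first recall the nearest-point facts underlying the two half-steps, as used when the LP method is introduced in Section~\ref{sec. LP}: by a Hoffman--Wielandt-type argument, for any symmetric matrix $M$ with eigenvalues $\mu_1\le\cdots\le\mu_n$ one has $\operatorname{dist}(M,\M)^2=\min_{\rho\in S_n}\sum_{i=1}^m(\mu_{\rho_i}-\lambda_i^*)^2$, with a minimiser on $\M$ that shares the eigenvectors of $M$. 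Applied to $M=A(\x)$---the lift $Z^k$ being an exact nearest point, so that the permutation defining it is optimal for $\x$---this reads $\|A(\x)-Z^k\|_F^2=2F(\x)$; applied to $M=A(\xx)$, together with $Z^k\in\M$, it gives $2F(\xx)=\operatorname{dist}(A(\xx),\M)^2\le\|A(\xx)-Z^k\|_F^2$. Since the projection half-step makes $A(\xx)$ the Frobenius-orthogonal projection of $Z^k$ onto $\mathcal A$, the residual $Z^k-A(\xx)$ is orthogonal to $\operatorname{span}\{A_1,\dots,A_\ell\}$, which contains $A(\xx)-A(\x)$; the Pythagorean identity thus gives $\|A(\x)-Z^k\|_F^2=\|A(\xx)-Z^k\|_F^2+\|A(\xx)-A(\x)\|_F^2$. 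Chaining the three relations,
\[
2F(\xx)\ \le\ \|A(\xx)-Z^k\|_F^2\ =\ \|A(\x)-Z^k\|_F^2-\|A(\xx)-A(\x)\|_F^2\ =\ 2F(\x)-\|A(\xx)-A(\x)\|_F^2 .
\]

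It then remains to show $\|A(\xx)-A(\x)\|_F^2=\nabla F(\x)^TB\inv\nabla F(\x)$. By Theorem~\ref{Thrm. Alg equivalence} (equivalently Theorem~\ref{thrm. LP is RGD}) the LP step is $\xx-\x=-B\inv\nabla F(\x)$; this also follows directly from the projection's normal equations, $\bigl(B(\xx-\x)\bigr)_j=\langle A_j,\,Z^k-A(\x)\rangle_F$, combined with the one-line computation $\langle A_j,\,Z^k-A(\x)\rangle_F=-\sum_{i=1}^m\bigl(\lambda_{\rho_i}(\x)-\lambda_i^*\bigr)\,q_i(\x)^TA_jq_i(\x)=-\bigl(\nabla F(\x)\bigr)_j$, using \eqref{eqn. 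Jacobian} and $\nabla F=J_r^Tr$. Then $A(\xx)-A(\x)=\sum_i(\xx-\x)_iA_i$ yields $\|A(\xx)-A(\x)\|_F^2=(\xx-\x)^TB(\xx-\x)=\nabla F(\x)^TB\inv\nabla F(\x)$, where $B$ is the Gram matrix of $A_1,\dots,A_\ell$ as in \eqref{eqn. Def B}. Substituting into the display above and dividing by two gives the asserted inequality; and since $A_0,\dots,A_\ell$ are linearly independent, $B$ and hence $B\inv$ are positive definite, so the subtracted term is strictly positive whenever $\nabla F(\x)\neq0$, which is the strict-descent claim.

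The one ingredient I expect to require real care is the nearest-point lemma for $\M$---the generalised Hoffman--Wielandt fact that overwriting $m$ eigenvalues of $M$, optimally assigned, produces the closest element of $\M$, and that it has the same eigenvectors as $M$. If this is not already available in exactly the needed form from the treatment of the LP method, I would prove it by diagonalising $M=Q\Lambda Q^T$, reducing the minimisation over orthogonal conjugations to a discrete assignment problem via Birkhoff's theorem and the rearrangement inequality, and then minimising over the $n-m$ free eigenvalues. The remaining steps---the Pythagorean identity for the projection and the identification of the LP step with $-B\inv\nabla F$---are routine or already established.
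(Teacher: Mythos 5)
Your proof is correct, but it takes a genuinely different route from the paper's. The paper argues analytically: it writes the exact second-order Taylor expansion of $F$ along the step $p^k=-B\inv\nabla F(\x)$ and invokes the Loewner-order comparison $B\succeq H_F$ (Lemma \ref{Lem. B-H_F posdef}, proved by expanding the Hessian including the second-order eigenvalue-perturbation terms $H_{r_k}$ and comparing with $\|\tilde A(v)\|_F^2$), so all the work sits in that Hessian bound. You instead argue geometrically, in effect quantifying Chen--Chu's descent chain (Theorem \ref{thrm. Chen Chu descent algorithm}): $\|A(\x)-Z^k\|_F^2=2F(\x)$ follows from Lemma \ref{Lem: Form Z}, the Pythagorean identity for the orthogonal projection onto the affine family gives the exact slack $\|A(\xx)-A(\x)\|_F^2$, the step formula $\xx-\x=-B\inv\nabla F(\x)$ (Theorem \ref{Thrm. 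Alg equivalence}) identifies that slack with $\nabla F(\x)^TB\inv\nabla F(\x)$, and the Hoffman--Wielandt-type nearest-point property of the lift (the same fact the paper imports from Brockett to define $Z^k$, needed here only in its inequality direction, and which you rightly flag as the one ingredient requiring proof) yields $2F(\xx)\le\|A(\xx)-Z^k\|_F^2$; your sketched proof of that lemma via diagonalisation and an assignment argument is sound. As for what each approach buys: your argument requires no differentiability of the eigenvalue functions along the segment and automatically accounts for the re-optimisation of the permutation $\rho$ at $\xx$ (the nearest-point inequality already minimises over $\rho$), whereas the paper's Taylor argument works with a fixed $\rho$ and an intermediate-point Hessian; conversely, the paper's route produces the reusable estimate $B\succeq H_F$, of independent interest and used for its doubled-step corollary --- though your geometry recovers that corollary too, since the reflection of $A(\x)$ through its projection point is equidistant from $Z^k$.
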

Furthermore, we are able to generalise the method by augmenting the objective function, something less obvious in the original formulation of Lift and Projection. We present some numerical experiments in Section \ref{sec. Numerical Experiments}. 

\begin{algorithm}\caption{Lift and Projection method, as in \cite{chu_inverse_2005}, (LP)}\label{alg. LP}
\begin{algorithmic}
\State \textbf{given} $x^0 \in \mathbb{R}^\ell$, $\epsilon > 0$
\State Form the Gram matrix $B$ as in \eqref{eqn. Def B}
\For{$k = 0,1,2,\dots$}
\State Compute  the full eigendecomposition of $A(x^k)$
\State Calculate $\rho$ that solves Problem \ref{prob: rho}
\State Lift step: Form $Z^k$ by (\ref{OG Lift})
\State Project step: Calculate $x^{k+1}$ by (\ref{eqn: Old Project})
\If{$\|\xx-\x\| < \epsilon$} 
\State\Return $x^{k+1}$
\EndIf
\EndFor
\end{algorithmic}
\end{algorithm}

\begin{algorithm}\caption{Lift and Projection as Riemannian Gradient Descent, (RGD LP)}\label{alg. LP as GD}
\begin{algorithmic} 
\State \textbf{given} $x^0 \in \mathbb{R}^\ell$, $\epsilon > 0$
\State Form the Gram matrix $B$ as in \eqref{eqn. Def B}
\For{$k = 0, 1,2,\dots$}
\State Compute  the  partial eigendecomposition, $Q_1\Lambda_1 Q_1$,  of $A(x^k)$
\State Calculate the gradient $\nabla F(\x) = J_r(\x)^Tr(\x)$ by \eqref{eqn. Jacobian}  
\State Form the step $p^k = - B^{-1}\nabla F(\x)$
\State $x^{k+1} = x^k +p^k$
\If{$\left((p^k)^T B p^k\right)^{1/2} < \epsilon$} 
\State\Return $x^{k+1}$
\EndIf
\EndFor
\end{algorithmic}
\end{algorithm}

\section{Lift and Projection}\label{sec. LP}
The Lift and Projection method due to Chen and Chu \cite{chen_least_1996},  is a method specifically designed for solving the least squares inverse eigenvalue problem. It consists of two operations repeated iteratively: the lift step of mapping to the nearest point on the manifold of matrices with the correct eigenvalues, and the projection step of orthogonally projecting onto the space of matrices with the required structure.
\subsection{Lift}
The lift step, at each iteration $k$, finds the matrix with the desired spectrum that is nearest to  $A(x^k)$ in the Frobenius norm. Since $A(x^k)$ is a linear combination of Hermitian matrices it is also Hermitian and therefore we can write its spectral decomposition as
\begin{equation}
    A(x^k) = Q(x^k)\Lambda(x^k)Q(x^k)^T.
\end{equation}
Note that it is possible to reorder the eigenvalues and corresponding eigenvectors such that the decomposition becomes
\begin{equation}\label{eqn: eigendecomposition}
    A( x^k) =     Q\begin{pmatrix}
    \Lambda_{1}(x^k) &0\\
    0& \Lambda_{2}( x^k)
    \end{pmatrix}Q^T = \begin{pmatrix}
        Q_1 & Q_2
    \end{pmatrix}
    \begin{pmatrix}
    \Lambda_{1}(x^k) &0\\
    0& \Lambda_{2}(x^k)
    \end{pmatrix}\begin{pmatrix}
        Q_1^* \\ Q_2^*
    \end{pmatrix}
\end{equation}
where $\Lambda_1( x^k) = \operatorname{diag}(\lambda_{\rho_1},\dots,\lambda_{\rho_m})$ is the diagonal matrix of eigenvalues of $A(x)$ closest to the prescribed eigenvalues; $\Lambda_{2}( x^k) = \operatorname{diag}(\lambda_{\rho_{m+1}},\dots,\lambda_{\rho_n}) $ is the diagonal matrix formed of the remaining eigenvalues of $A(x)$ and $Q_1, Q_2$ are the matrices of corresponding eigenvectors.

Using \cite{brockett_dynamical_1991}, we see that the  matrix, $Z^k$, with the desired eigenvalues that is closest to $A(x^k)$ is given by simply replacing  $\Lambda_1$ with the matrix of prescribed eigenvalues  $\Lambda^* = \operatorname{diag}(\lambda_1^*,\dots\lambda_m^*)\in \mathbb R^{m\times m}$,
\begin{equation}\label{OG Lift}
    Z^{k} = Q(x^k)
    \begin{pmatrix}
    \Lambda^* &0\\
    0& \Lambda_{2}(x^k)
    \end{pmatrix}Q(x^k)^T.
\end{equation}
The cost of this lift step is one full eigendecomposition, one  solve of Problem \ref{prob: rho} to find the permutation $\rho$, and two  matrix-matrix multiplications, which is $O(n^3)$ in total. 

\subsection{Projection}
The projection step  finds the next iterate, $x^{k+1}$, such that  $A(x^{k+1})$ is the matrix that minimises
\begin{equation}
    \|Z^k - A(x^{k+1})\|_F.
\end{equation}
This step can be calculated by solving the linear system 
\begin{equation}\label{eqn: Old Project}
    Bx^{k+1} = c,
\end{equation}
where $B \in \mathbb R^{\ell\times \ell}$ is  the Gram matrix
\begin{equation}\label{eqn. Def B}
    B =  
    \begin{pmatrix}
        \langle A_1, A_1 \rangle_F & \langle A_1, A_2 \rangle_F & \cdots \\
        \langle A_2, A_1 \rangle_F & \ddots & \vdots \\
        \vdots & \cdots & \langle A_\ell, A_\ell \rangle_F
    \end{pmatrix},
\end{equation}
given by the  Frobenius inner product, and $c(x^k)\in \mathbb R^\ell$ is the vector
\begin{equation}\label{eqn. Def c}
    c = 
    \begin{pmatrix}
         \langle Z^k-A_0, A_1\rangle_F\\
         \vdots\\
          \langle Z^k-A_0, A_\ell\rangle_F
    \end{pmatrix}.
\end{equation}
Notice that $B$ does not depend on $x^k$, and so does not need to be calculated each iteration. The cost of this step is two triangular solves, given a precomputed Cholesky factorisation of $B$, which is negligible compared to the eigendecomposition.

Chen and Chu  showed that the Lift and Projection method is a globally convergent algorithm, with the following result.
\begin{theorem}[Theorem 4.1 from \cite{chen_least_1996}]\label{thrm. Chen Chu descent algorithm}
The Lift and Projection algorithm is a descent method in the sense that
\begin{equation}\label{eq. Chu Chen descent thrm}
    || A(x^{k+1})-Z^{k+1}||_F  \leq || A(x^{k+1}) - Z^{k}||_F \leq || A(x^{k}) - Z^{k}||_F,
\end{equation}
for $k = 0,1,2,\ldots$.
\end{theorem}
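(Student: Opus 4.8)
The plan is to obtain both inequalities in \eqref{eq. Chu Chen descent thrm} directly from the two variational characterisations built into a single Lift-and-Projection step, in the spirit of Chen and Chu's original argument. Introduce the \emph{spectral constraint set}
\begin{equation*}
\M = \{\, M = M^{T}\in\mathbb R^{n\times n} : \lambda_1^*,\dots,\lambda_m^* \text{ are among the eigenvalues of } M \,\},
\end{equation*}
and the affine subspace $\mathcal A = \{\, A(x) : x\in\mathbb R^\ell \,\}$. Each iteration of Algorithm \ref{alg. LP} consists of two Frobenius-norm metric projections. The lift step sends $A(x^k)$ to a nearest point $Z^k\in\M$ (precisely the content of \eqref{OG Lift}, justified by \cite{brockett_dynamical_1991} and the optimality of the permutation $\rho$ solving Problem \ref{prob: rho}); the projection step \eqref{eqn: Old Project}--\eqref{eqn. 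Def B} sends $Z^k$ to the nearest point $A(x^{k+1})$ of $\mathcal A$, via the normal equations $Bx^{k+1}=c$.

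First I would dispose of the second inequality: since $A(x^{k+1})$ minimises $\|Z^k - A(x)\|_F$ over $x\in\mathbb R^\ell$ and $A(x^k)\in\mathcal A$ is an admissible competitor, $\|A(x^{k+1}) - Z^k\|_F \leq \|A(x^k) - Z^k\|_F$ follows at once. For the first inequality, I would note that $Z^{k+1}$ minimises $\|A(x^{k+1}) - M\|_F$ over $M\in\M$, and that $Z^k\in\M$: by \eqref{OG Lift} the eigenvalues of $Z^k$ are $\lambda_1^*,\dots,\lambda_m^*$ together with the remaining eigenvalues $\Lambda_2(x^k)$ of $A(x^k)$, so the prescribed values appear among them. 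Hence $Z^k$ is a competitor in the minimisation defining $Z^{k+1}$, which gives $\|A(x^{k+1}) - Z^{k+1}\|_F \leq \|A(x^{k+1}) - Z^k\|_F$. Concatenating the two bounds, \eqref{eq. Chu Chen descent thrm} holds for every $k\geq 0$.

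The one ingredient that is not a one-liner, and which I expect to be the main obstacle, is verifying that the explicit lift \eqref{OG Lift} really is a global metric projection onto $\M$, and not merely a minimiser over the submanifold obtained by freezing the eigenbasis at $Q(x^k)$. Concretely, one must show that among all $M\in\M$ the minimum of $\|A(x^k)-M\|_F$ is attained by retaining the eigenvectors $Q(x^k)$, keeping the unconstrained eigenvalues equal to $\Lambda_2(x^k)$, and assigning $\Lambda^*$ to the spectrum of $A(x^k)$ through the optimal $\rho$. The standard route minimises first over the unconstrained eigenvalues for a fixed eigenbasis and then over the eigenbasis, the latter step being handled by von Neumann's trace inequality (equivalently the result of \cite{brockett_dynamical_1991}). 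One should also record that $\M$ is closed (it is the common zero set of the coefficients of the remainder of the characteristic polynomial of $M$ upon division by $\prod_{i=1}^{m}(t-\lambda_i^*)$), so that the projection defining $Z^k$ exists. Granting this, the descent chain is exactly the two comparisons above; as a sanity check, since $\|A(x^k)-Z^k\|_F^2 = 2F(x^k)$, the outer terms of \eqref{eq. Chu Chen descent thrm} collapse to $F(x^{k+1})\leq F(x^k)$, which is also implied by the sharper quantitative bound of Theorem \ref{thrm. Global convergence of RGD LP}.
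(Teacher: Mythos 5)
Your argument is correct, and it is essentially the classical alternating-projection argument of Chen and Chu; but it is worth noting that the paper itself never proves this statement -- it is quoted directly as Theorem 4.1 of \cite{chen_least_1996}, and the paper's own convergence analysis (Section \ref{sec. Global Convergence}) takes a genuinely different route to a stronger conclusion. You obtain the chain \eqref{eq. Chu Chen descent thrm} from the two variational characterisations of a single iteration: $A(x^{k+1})$ is the Frobenius-nearest point of the affine family to $Z^k$ (giving the right-hand inequality with competitor $A(x^k)$), and $Z^{k+1}$ is the Frobenius-nearest point of the spectral set $\M$ to $A(x^{k+1})$ (giving the left-hand inequality with competitor $Z^k\in\M$). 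You correctly isolate the only nontrivial ingredient, namely that the explicit lift \eqref{OG Lift} with the optimal permutation $\rho$ is a \emph{global} metric projection onto $\M$ and not merely optimal for the frozen eigenbasis $Q(x^k)$; this is exactly the fact the paper takes from \cite{brockett_dynamical_1991} (equivalently von Neumann's trace inequality or a Wielandt--Hoffman argument), and your closedness remark disposes of existence. The paper instead proves Theorem \ref{thrm. Global convergence of RGD LP}: using the identification of the step as $x^{k+1}=x^k-B^{-1}\nabla F(x^k)$ and the Loewner-order comparison $B\succeq H_F$ of Lemma \ref{Lem. B-H_F posdef}, a Taylor expansion yields the quantitative bound $F(x^{k+1})\leq F(x^k)-\tfrac12\nabla F(x^k)^TB^{-1}\nabla F(x^k)$. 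Since $\|A(x^k)-Z^k\|_F^2=2F(x^k)$ (as you observe), that bound recovers the outer comparison of \eqref{eq. Chu Chen descent thrm} with a strict decrease away from stationary points, which the purely variational argument cannot give; conversely, your proof also delivers the intermediate quantity $\|A(x^{k+1})-Z^k\|_F$ and requires no Hessian formula or spectral-gap bookkeeping, so each approach buys something the other does not.
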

In Section \ref{sec. Global Convergence} we will prove a stronger version of this theorem.
\subsection{Algorithm Equivalence}
We shall now prove the following Theorem.
\begin{theorem}\label{Thrm. Alg equivalence}
Algorithm \ref{alg. LP as GD} and  Algorithm \ref{alg. LP} are equivalent algorithms in the sense that they produce exactly the same sequence of  iterates, $x^k$.
\end{theorem}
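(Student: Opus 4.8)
The plan is to show that, started from the same point $\x$, Algorithm~\ref{alg. LP} and Algorithm~\ref{alg. LP as GD} apply the \emph{same} explicit map $\x \mapsto \xx$. From \eqref{eqn: Old Project}, Algorithm~\ref{alg. LP} computes $\xx = B\inv c(\x)$, while Algorithm~\ref{alg. LP as GD} computes $\xx = \x - B\inv \nabla F(\x)$. Since $A_0,\dots,A_\ell$ are linearly independent, the Gram matrix $B$ of \eqref{eqn. Def B} is symmetric positive definite, hence invertible, so both maps are well defined. Thus it is enough to establish the identity $\nabla F(\x) = B\x - c(\x)$ for every $k$; an induction on $k$ (the two algorithms share the initial point $x^0$) then gives the claim.

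To prove the identity I would expand the right-hand side componentwise. Using $A(\x) - A_0 = \sum_{i=1}^\ell x_i^k A_i$ gives $(B\x)_j = \sum_{i=1}^\ell \langle A_j, A_i\rangle_F x_i^k = \langle A_j,\, A(\x) - A_0\rangle_F$, while $c_j(\x) = \langle Z^k - A_0,\, A_j\rangle_F$ by \eqref{eqn. Def c}, so that $(B\x - c(\x))_j = \langle A_j,\, A(\x) - Z^k\rangle_F$. Now substitute the reordered eigendecomposition \eqref{eqn: eigendecomposition} and the definition \eqref{OG Lift} of $Z^k$: the blocks $\Lambda_2(\x)$ and $Q_2$ cancel, leaving
\[
    A(\x) - Z^k \;=\; Q_1\big(\Lambda_1(\x) - \Lambda^*\big)Q_1^T \;=\; \sum_{i=1}^m r_i(\x,\rho)\, q_i(\x) q_i(\x)^T,
\]
since $\Lambda_1(\x) - \Lambda^* = \operatorname{diag}\!\big(r_1(\x,\rho),\dots,r_m(\x,\rho)\big)$. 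Taking Frobenius inner products with $A_j$ and using cyclicity of the trace together with the symmetry of $A_j$ yields $(B\x - c(\x))_j = \sum_{i=1}^m r_i(\x,\rho)\, q_i(\x)^T A_j\, q_i(\x)$.

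It remains to recognise this as the gradient. By the Hellman--Feynman formula the Jacobian entries are $(J_r(\x))_{ij} = q_i(\x)^T A_j\, q_i(\x)$ as in \eqref{eqn. Jacobian}, so $(\nabla F(\x))_j = (J_r(\x)^T r(\x))_j = \sum_{i=1}^m r_i(\x,\rho)\, q_i(\x)^T A_j\, q_i(\x)$, which is exactly the expression just obtained. Hence $B\x - c(\x) = \nabla F(\x)$, and therefore $B\inv c(\x) = \x - B\inv \nabla F(\x)$, i.e.\ the two updates agree.

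The computation itself is routine; the only point that needs care --- and really the substance of the theorem --- is the consistency of the data fed into these formulas. Algorithm~\ref{alg. LP} forms $Z^k$ from a \emph{full} eigendecomposition reordered by a permutation $\rho$ solving Problem~\ref{prob: rho}, whereas Algorithm~\ref{alg. LP as GD} computes only the \emph{partial} decomposition $Q_1\Lambda_1 Q_1^T$; I would observe that this partial decomposition consists precisely of the $m$ eigenpairs $\big(q_i(\x),\lambda_{\rho_i}(\x)\big)$ picked out by that same optimal $\rho$, so $\Lambda_1(\x)$, $Q_1$ and $r(\x,\rho)$ coincide in both algorithms, and --- crucially --- the discarded blocks $\Lambda_2(\x), Q_2$ never affect $\xx$ because they cancel in $A(\x) - Z^k$. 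Finally one should note that the two stopping tests differ ($\big((p^k)^T B p^k\big)^{1/2}$ with $p^k = \xx-\x$, versus $\|\xx-\x\|_2$), so the equivalence is for the sequence of iterates generated before termination; replacing one test by the other makes the algorithms literally identical.
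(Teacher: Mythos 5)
Your proposal is correct and follows essentially the same route as the paper: your identity $A(\x)-Z^k = Q_1\big(\Lambda_1(\x)-\Lambda^*\big)Q_1^T$ is exactly the paper's Lemma~\ref{Lem: Form Z}, and the rest (bilinearity of the Frobenius inner product, the trace/Hellman--Feynman identification of $\sum_i r_i q_i^T A_j q_i$ with $(J_r^T r)_j$, and invertibility of $B$ from linear independence of the $A_i$) mirrors the paper's argument that $c = B\x - \nabla F(\x)$. Your closing remarks on the consistency of the partial eigendecomposition and the differing stopping tests are reasonable additions but not substantively different from the paper's proof.
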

First we prove a Lemma that describes the lift step of the algorithm.
\begin{lemma} \label{Lem: Form Z}
The Lift step can be written as
\begin{equation}\label{eqn: Lift step}
\quad  Z^k  =A(x^k) -  Q_1 \, \Delta \Lambda \,Q_1^T
\end{equation}
where $Q_1\in \mathbb R^{n\times m}$  is as in \eqref{eqn: eigendecomposition}, and $\, \Delta \Lambda \, = \operatorname{diag}(r(x,\rho))$.
\end{lemma}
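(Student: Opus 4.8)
The plan is to start from the formula \eqref{OG Lift} for $Z^k$ and rewrite it by adding and subtracting $A(x^k)$ in the spectral basis. Writing $A(x^k) = Q\,\mathrm{diag}(\Lambda_1(x^k),\Lambda_2(x^k))\,Q^T$ and $Z^k = Q\,\mathrm{diag}(\Lambda^*,\Lambda_2(x^k))\,Q^T$, the difference is supported only on the first block, so
\begin{equation}
Z^k - A(x^k) = Q\begin{pmatrix}\Lambda^* - \Lambda_1(x^k) & 0\\ 0 & 0\end{pmatrix}Q^T = Q_1\bigl(\Lambda^* - \Lambda_1(x^k)\bigr)Q_1^T,
\end{equation}
using the block partition $Q = (Q_1\ \ Q_2)$ from \eqref{eqn: eigendecomposition} and that the zero blocks annihilate the $Q_2$ contributions.

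The second step is to identify the diagonal matrix $\Lambda^* - \Lambda_1(x^k)$ with $-\,\Delta\Lambda$. By definition $\Lambda_1(x^k) = \operatorname{diag}(\lambda_{\rho_1}(x^k),\dots,\lambda_{\rho_m}(x^k))$ and $\Lambda^* = \operatorname{diag}(\lambda_1^*,\dots,\lambda_m^*)$, so the $i$th diagonal entry of $\Lambda_1(x^k) - \Lambda^*$ is $\lambda_{\rho_i}(x^k) - \lambda_i^* = r_i(x,\rho)$, which is exactly the $i$th component of the residual vector $r(x,\rho)$ from \eqref{eqn: Objective function}. Hence $\Lambda_1(x^k) - \Lambda^* = \operatorname{diag}(r(x,\rho)) = \Delta\Lambda$, and therefore $Z^k - A(x^k) = -\,Q_1\,\Delta\Lambda\,Q_1^T$, which rearranges to \eqref{eqn: Lift step}.

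This argument is essentially a direct computation, so I do not anticipate a genuine obstacle; the only point requiring a little care is bookkeeping of the reordering. One must be sure that the permutation $\rho$ used to define $\Lambda_1$ in \eqref{eqn: eigendecomposition} is the \emph{same} permutation appearing in the residual $r(x,\rho)$, so that the diagonal entries line up index-for-index; the excerpt fixes this by stipulating in Algorithm \ref{alg. LP} that $\rho$ solves Problem \ref{prob: rho} before the lift step, so the eigenvectors in $Q_1$ are ordered consistently with the entries of $r$. With that convention in place the identification is immediate and the lemma follows.
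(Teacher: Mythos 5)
Your proposal is correct and follows essentially the same route as the paper's own proof: both compute $Z^k - A(x^k)$ in the block-partitioned spectral basis of \eqref{eqn: eigendecomposition}, observe that only the leading block $\Lambda^* - \Lambda_1(x^k)$ survives, and identify it with $-\,\Delta\Lambda = -\operatorname{diag}(r(x,\rho))$. Your extra remark about the permutation $\rho$ being consistent between $\Lambda_1$ and $r$ is a sensible bookkeeping check that the paper leaves implicit, but it does not change the argument.
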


\begin{proof}
First we recall the formula for $A(x)$ given by  \eqref{eqn: eigendecomposition}, and $Z^k$ given by \eqref{eqn: Lift step}. We will then show that $Q_2$ is in fact not needed in the calculation of $Z^k$. Looking  at the difference between $A(x^k)$ and $Z^k$ we see that 
\begin{align}
    Z^{k} - A(x^k) &= 
    \begin{pmatrix}
Q_1& Q_2
\end{pmatrix}
    \begin{pmatrix}
    \Lambda^* &0\\
    0& \Lambda_{2}
    \end{pmatrix}\begin{pmatrix}
Q_1^T\\ 
Q_2^T
\end{pmatrix}
  -
  \begin{pmatrix}
Q_1& Q_2
\end{pmatrix}
    \begin{pmatrix}
    \Lambda_{1} &0\\
    0& \Lambda_{2}
    \end{pmatrix}\begin{pmatrix}
Q_1^T\\ 
Q_2^T
\end{pmatrix}
\\
     &= \begin{pmatrix}
Q_1& Q_2
\end{pmatrix}
    \begin{pmatrix}
    \Lambda^* -\Lambda_{1} &0\\
    0& 0
    \end{pmatrix}\begin{pmatrix}
Q_1^T\\ 
Q_2^T
\end{pmatrix} \\
&= Q_1(\Lambda^* -\Lambda_{1})Q_1^T\\
&= - Q_1 \, \Delta \Lambda \, Q_1^T. 
\end{align}
\end{proof}

We will now prove Theorem \ref{Thrm. Alg equivalence}.
\newcommand{\1}{_1}
\begin{proof}[Proof of Theorem \ref{Thrm. Alg equivalence}]
We recall from \eqref{eqn. Def c} that
\begin{equation}
    c_j = \langle Z^k-A_0,A_j\rangle_F,
\end{equation} 
which we can rewrite using Lemma \ref{Lem: Form Z} as
\begin{equation}
     c_j = \langle A(x^k) -Q\1 \,\Delta \Lambda\,  Q^T\1-A_0,A_j\rangle_F.
\end{equation}

Then, since the Frobenius inner product is bilinear we can separate the equation for $c_j$ into
\begin{equation}\label{eqn: c components}
     c_j = 
     \langle A(x^k) -A_0,A_j\rangle_F -\langle Q\1 \, \Delta \Lambda \, Q\1^T,A_j\rangle_F .
\end{equation}
We now look at both of these terms individually. Using bilinearity again we can expand the first term as
\begin{equation}
    \langle A(x^k)-A_0,A_j\rangle_F = \sum^\ell_{i=1} \langle A_i,A_j\rangle_F x_i^k = \sum^\ell_{i=1} B_{i,j}x_i^k = b_j^Tx^k
\end{equation}
where $B$ is the Gram matrix $B$ defined in \eqref{eqn. Def B} and $b_j$ is its $j$th column. By using the definition of Frobenius inner product and the symmetry of $\, \Delta \Lambda \,$ we can expand the second term of \eqref{eqn: c components}  as
\begin{align}
    \langle Q\1 \, \Delta \Lambda \, Q\1^T,A_j\rangle_F &= \operatorname{Tr}((Q\1 \, \Delta \Lambda \, Q\1^T)^TA_j) = \operatorname{Tr}(Q\1 \, \Delta \Lambda \, Q\1^TA_j) = 
    \operatorname{Tr}(\, \Delta \Lambda \, Q\1^TA_jQ\1)\\
    &=\sum^m_{i=1} \, \Delta \Lambda \,_{i,i} (Q\1^TA_jQ\1)_{i,i} = \sum^m_{i=1} r_i q_i^TA_jq_i,
\end{align}
where $r_i$ is the $i$th component of the residual $r(x,\rho)$, and $q_i$ is the $i$th column of $Q\1$. Note that by \eqref{eqn. Jacobian} we can  write this in terms of the Jacobian matrix $J_r(x) \in \mathbb R^{m\times l}$ as
\begin{align}
   \sum^m_{i=1} r_i q_i^TA_jq_i = \sum^m_{i=1} r_i (J_r)_{i,j} = \sum^m_{i=1} (J_r^T)_{j,i}r_i  = (J_r^Tr)_{j}.
\end{align}
Thus  \eqref{eqn: c components} becomes
\begin{equation}
    c =Bx^k -  J_r^Tr.
\end{equation}
Then, substituting this into \eqref{eqn: Old Project} we get
\begin{equation}
      Bx^{k+1} = Bx^k -  J_r^Tr.
\end{equation}
Multiplying by $B^{-1}$ and using the fact that $\nabla F = \frac 1 2 \nabla  r^Tr =  J_r^Tr$ we get
\begin{equation}\label{LP as Newton step}
    x^{k+1} = x^k - B^{-1}\nabla F.
\end{equation}
We know that $B$ is invertible since it is assumed that the $A_i$ are linearly independent.

Thus, we have shown that the sequence of steps taken by the Lift and Projection algorithm given by Algorithm \ref{alg. LP} is equivalent to the steps, $\xx = \x - B\inv\nabla F$, in Algorithm \ref{alg. LP as GD}.
\end{proof}  

\section{Riemannian Gradient Descent}\label{sec. Riemannian GD}

Riemannian Gradient Descent is a generalisation of the standard gradient descent method to a Riemannian manifold, not just standard Euclidean space. We will then prove Theorem \ref{thrm. LP is RGD}, that is, the equivalence of Lift and Projection method and a certain Riemannian Gradient Descent method.


\begin{algorithm}\caption{Riemannian Gradient Descent, as in \cite{boumal_introduction_2023}}\label{alg. RGD}
\begin{algorithmic}
\State Let $\M$ be a manifold, equipped with Riemannian metric $g$
\State Initial guess = $x^0 \in \M$ 
\For{$k = 1,2,\dots$}
\State Calculate $\nabla_g F(x^k)$, the gradient of $F(x)$ on $\M$
\State $\xx = \x - \nabla_g F(x^k)$
\EndFor
\end{algorithmic}
\end{algorithm}

What makes a `Riemannian' gradient descent method different is that the Riemannian gradient, $\nabla_g F$, is calculated with respect to the Riemannian metric associated with the manifold $\M$. When $\M = \mathbb R^n$ and $g$ is the standard Euclidean metric, i.e.~the dot product,  then this method simplifies to the standard gradient descent method.

\subsection{Riemannian Geometry}
We will now introduce some definitions and concepts from Riemannian geometry to discuss this method. 
\begin{definition}[{\cite[p.~194]{boumal_introduction_2023}}]
Let $\mathcal{M}$ be a smooth manifold. An inner product on $T_x\M$ is a bilinear, symmetric, positive definite function $g_x : T_x\M \times T_x\M \rightarrow\mathbb R$. It induces a norm for tangent vectors: $\|u\|_{g_x} =  g_x(u,u)$. A metric, $g$,  on $\M$ is a choice of inner product $g_x$ for each $x \in \M$.
\end{definition}
\begin{definition}[{\cite[p.~194]{boumal_introduction_2023}}]
    A metric $g$ on $\M$ is a Riemannian metric if it varies smoothly with $x$. A Riemannian manifold is a manifold with an associated Riemannian metric, which can be denoted as the pair $(\M, g)$.
\end{definition}

\begin{definition}[{\cite[p.~18]{boumal_introduction_2023}}]
Let $\varphi : \M \rightarrow \N$ be smooth function between Riemannian manifolds $\M$ and $\N$, and let $\gamma:\mathbb [-1,1]\rightarrow\M$ be any differentiable curve such that $\gamma(0) = x, \gamma'(0)=v $. 
Then the directional derivative of $\varphi$ at $x \in \mathcal{M}$ in the direction $v \in T_x\mathcal{M}$ is given by
\begin{equation}
     D\varphi (x)[v] = \left.\frac{d}{dt} \varphi\circ\gamma(t)\right|_{t=0} =  \lim_{t\rightarrow0} \frac{\varphi (\gamma(t)) - \varphi (x)}{t}.
\end{equation}
This defines the differential, $D\varphi(x) : T_x\M \to T_{\varphi(x)}\N$, of $\varphi$ at $x$.
\end{definition}

\begin{definition}[{\cite[p.19]{boumal_introduction_2023}}]\label{def. gradient}
Let $F : \M \rightarrow \mathbb R$ be smooth on a Riemannian manifold $\M$. The Riemannian gradient of $F$ on $\M$ is the vector field  $\nabla_g F : \mathcal{M} \to T\mathcal{M}$  defined by the identity
\begin{equation}
    DF(x)[v] = g( v, \nabla_g F(x)), \qquad \forall (x,v) \in T\M .
\end{equation}
\end{definition}
The gradient $\nabla_g F(x)$ is dependent on the choice of metric $g$, whereas the differential of $F$ is not. In some cases there is  a `natural' choice of metric. Often the Riemannian manifold we would like to optimise on, $\M$  is a submanifold of some larger manifold with a defined metric, this allows us to induce a metric on $\M$.


\begin{definition}
    Let $A:\M\rightarrow\N$ then $A$ is an immersion if its differential $DA(x)$ is injective at every $x\in \M$.
\end{definition}

\begin{definition}[\cite{lee_introduction_2018}]\label{def. Induced Metric}
Let $\N$ be a Riemannian manifold equipped with metric $ g$, and let  $\M$ be a smooth manifold. Suppose $A:\M\rightarrow\N$ is an immersion, then the induced metric $g^A$ on $\M$ (induced by $A$) is given by
\begin{equation}
    g^A(u,v) :=  g(DA(x)[u],DA(x)[v]),
\end{equation}
for all $u,v \in T\M_x$
\end{definition}
In fact by the Nash Embedding Theorem every manifold is a submanifold of Euclidean space, which allows us to use the Euclidean metric restricted to $T\M$.

\subsection{The Lift and Projection method as a Riemannian gradient descent method}
In this Section we prove Theorem \ref{thrm. LP is RGD}.

\begin{lemma}\label{Lemma. induced gradient}
    Let $\M$ and $\N$ be manifolds where $\N$ is equipped with the metric $g$. Let  $A:\M\rightarrow\N$ be an immersion and  $F:\M\rightarrow \mathbb R$ be a differentiable function. Then $\nabla_{g^A}F$, that is the gradient of $F$ on the manifold $\M$ with metric $g^A$ induced by $A$, is defined by
    \begin{equation}
         DF(x)[v] = g( DA(x) v, DA(x) \nabla_{g^A}F(x)), \qquad \forall (x,v) \in T\M.
    \end{equation}
    Therefore,
    \begin{equation}
        \nabla_{g^A} F(x) =  (DA(x)^* DA(x))\inv \, \nabla_g F(x),
    \end{equation}
    where $DA(x)^*$ is the adjoint operator of $DA(x) : T_x\M \to T_{A(x)} \N$ taken with respect to the metric $g$.
\end{lemma}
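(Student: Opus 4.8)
The plan is to prove the two displayed identities in turn, the first being essentially a restatement of Definition \ref{def. gradient} applied to the induced metric, and the second being an exercise in manipulating adjoints.

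First I would start from the definition of the Riemannian gradient (Definition \ref{def. gradient}) applied on the manifold $\M$ equipped with the metric $g^A$: by definition, $\nabla_{g^A}F(x)$ is the unique tangent vector at $x$ satisfying
\begin{equation}
  DF(x)[v] = g^A(v, \nabla_{g^A}F(x)) \qquad \forall v \in T_x\M.
\end{equation}
Now I would simply substitute the definition of the induced metric from Definition \ref{def. Induced Metric}, namely $g^A(u,w) = g(DA(x)[u], DA(x)[w])$, with $u = v$ and $w = \nabla_{g^A}F(x)$. This immediately yields the first displayed equation. One should remark that existence and uniqueness of $\nabla_{g^A}F(x)$ is guaranteed because $g^A$ is a genuine inner product on $T_x\M$: positive-definiteness of $g^A$ follows from the fact that $A$ is an immersion, so $DA(x)$ is injective, hence $g(DA(x)[u],DA(x)[u]) = 0$ forces $DA(x)[u] = 0$ forces $u = 0$.

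Second, for the formula for $\nabla_{g^A}F$, I would introduce the adjoint $DA(x)^*: T_{A(x)}\N \to T_x\M$ of the linear map $DA(x)$, characterised by $g(DA(x)[u], w) = g^{\M}(u, DA(x)^*[w])$ --- but here there is a subtlety: the right-hand inner product must be taken in \emph{some} reference inner product on $T_x\M$. The cleanest route is to take the adjoint with respect to whatever background inner product makes $\nabla_g F(x)$ meaningful on $\M$ (i.e.\ the one used to define $DF(x)[v] = \langle v, \nabla_g F(x)\rangle$ as a Euclidean gradient); in the present setting $\M = \mathbb R^\ell$ with the standard dot product, so this is unambiguous. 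Then I would rewrite the first identity as
\begin{equation}
  \langle v, \nabla_g F(x)\rangle = g(DA(x)[v], DA(x)[\nabla_{g^A}F(x)]) = \langle v, DA(x)^* DA(x)[\nabla_{g^A}F(x)]\rangle,
\end{equation}
valid for all $v \in T_x\M$, whence $\nabla_g F(x) = DA(x)^* DA(x)[\nabla_{g^A}F(x)]$. Inverting the self-adjoint positive-definite operator $DA(x)^* DA(x)$ gives the claimed formula.

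The main obstacle, such as it is, is purely expository: making precise with respect to which inner products the adjoint $DA(x)^*$ and the ``Euclidean'' gradient $\nabla_g F$ are taken, since $\nabla_g F$ as written in the lemma is slightly abusive notation (it is the differential viewed through a reference metric on $\M$, not through $g$, which lives on $\N$). I would resolve this by a sentence fixing the convention — in our application $\M = \mathbb R^\ell$ carries the standard dot product — and by noting that $DA(x)^* DA(x)$ is automatically invertible since it is self-adjoint and positive definite, the latter again because $DA(x)$ is injective. The algebraic core of the argument is then a one-line chain of equalities and requires no real work.
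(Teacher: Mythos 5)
Your proposal is correct and follows essentially the same route as the paper: apply Definition \ref{def. gradient} with the induced metric to get the first identity, pass the adjoint across to obtain $\nabla_g F(x) = DA(x)^*DA(x)\,\nabla_{g^A}F(x)$, and invert the operator, which is positive definite because $DA(x)$ is injective. Your extra remark pinning down the reference inner product on $T_x\M$ (needed to define both the adjoint and $\nabla_g F$ on $\M$) is a fair clarification of a notational looseness shared by the lemma statement and the paper's proof, but it does not change the argument.
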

\begin{proof}
    The first part of the lemma  follows from Definitions \ref{def. Induced Metric} and \ref{def. gradient}. 
    Then, using the definition of adjoint, we have for any $(x,v) \in T\mathcal{M}$
    \begin{align}
        DF(x)[v] &= g( DA(x) v, DA(x) \nabla_{g^A}F(x)) \\
        &= g(v, DA(x)^* DA(x) \nabla_{g^A} F(x)).
    \end{align}
    By Definition \ref{def. gradient}, we therefore have
    \begin{equation}
        \nabla_g F(x) = DA(x)^* DA(x) \nabla_{g^A} F(x).
    \end{equation}
Since $DA(x)$ is injective for all $x$, we have that $DA(x)^*DA(x)$ is a positive-definite linear operator on $T_x\mathcal{M}$, so we can invert it to obtain the second part of the lemma.
\end{proof}
We will now prove Theorem \ref{thrm. LP is RGD}.
\begin{proof}[Proof of Theorem \ref{thrm. LP is RGD}]
Let $F(x):\mathbb R^\ell \rightarrow\mathbb R$ be as in \eqref{eqn: Objective function}, $A(x): \mathbb R^\ell \rightarrow\mathbb R^{n\times n}$ be as in \eqref{eqn: A(x)}, and $g$ be the Euclidean (Frobenius) metric, then by Definition \ref{def. Induced Metric} 
 \begin{align}
 g^A(u,v) &= \langle DA(x)[u], DA(x)[v]\rangle_F= \langle DA[u], DA[v]\rangle_F  \\
 &= \langle \sum^\ell_{i=1} A_iu_i,  \sum^\ell_{i=1} A_iv_i\rangle_F =  \sum^\ell_{i=1} \sum^\ell_{j=1} \langle A_iu_i, A_jv_j\rangle_F\\
 &= \sum^\ell_{i=1} \sum^\ell_{j=1} u_i v_j \langle A_i, A_j\rangle_F\\
 &= u^TBv
 \end{align}
Therefore  $DA(x)^*DA(x) = B$, for $B$  as defined in \eqref{eqn. Def B}.
Therefore by Lemma \ref{Lemma. induced gradient} with $\M = \mathbb R^\ell$, $\N=\mathbb R^{n\times n}$ and $g$ the Euclidean (Frobenius) metric on $\mathbb R^{n\times n}$, the gradient of $F$ on the manifold $\mathbb R^\ell$ with metric induced by $A$ is 
\begin{align}
     \nabla_{g^A} F(x)  = (DA(x)^* DA(x))\inv \nabla_gF = B\inv \nabla F.
\end{align}
Therefore the Riemannian gradient descent method  (Algorithm \ref{alg. RGD}) using gradient $\nabla_{g^A} F(x)$ is equivalent to Algorithm \ref{alg. LP as GD} which by Theorem \ref{Thrm. Alg equivalence} is also equivalent to Algorithm \ref{alg. LP}.
\end{proof}



\section{Global Convergence}\label{sec. Global Convergence}
Chen and Chu showed, in Theorem \ref{thrm. Chen Chu descent algorithm}, that each step of  the Lift and Projection algorithm will not result in an increase in the objective function. This however does not guarantee  convergence  since none of the inequalities in \eqref{eq. Chu Chen descent thrm} are strict. In this Section we  will prove a  stronger result showing a decrease at all iterates before convergence is achieved. First we  prove a new result about the relationship between the Hessian of $F$, $H_F$, and the metric tensor $B$ as defined in \eqref{eqn. Def B}. Note that, by \cite{chen_least_1996,chu_inverse_2005}, we can write this Hessian as
\begin{equation}\label{eqn:Hessian}
H_F = J_r^TJ_r + \sum^m_{k=1} r_k H_{r_k},
    \qquad\text{where}\qquad(H_{r_k})_{ij} = 2\sum^m_{\substack{t=1\\\lambda_t\neq\lambda_k}} \frac{(q_t^TA_iq_k)(q_t^TA_jq_k)}{\lambda_k-\lambda_t}.
\end{equation}
We have written $\lambda_k$ instead of $\lambda_{\rho_k}$ for brevity. Recall from the introduction that $q_k$ is the eigenvector associated with the eigenvalue $\lambda_k$. The following is a key relationship between this Hessian and the Gram matrix, $B$, from equation \eqref{eqn. Def B}.
\begin{lemma}\label{Lem. B-H_F posdef}
    $B\geq H_F$ with respect to the Loewner order, that is, $B-H_F$ is positive semidefinite. 
\end{lemma}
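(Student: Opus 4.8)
The plan is to exploit the explicit formulas for $B$ and $H_F$ and show that $B - H_F$ is a sum of two positive semidefinite pieces. Recall from \eqref{eqn. Def B} that $B_{ij} = \langle A_i, A_j\rangle_F = \operatorname{Tr}(A_i A_j) = \sum_{s,t} (q_s^T A_i q_t)(q_t^T A_j q_s)$, where $\{q_s\}_{s=1}^n$ is a full orthonormal eigenbasis of $A(x)$ (so we are expanding the Frobenius inner product in that basis). Introducing the shorthand $a^{ij}_{st} := q_s^T A_i q_t = a^{ij}_{ts}$, we have $B_{ij} = \sum_{s=1}^n\sum_{t=1}^n a^{ij}_{st}$, and from \eqref{eqn. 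Jacobian}, $(J_r^T J_r)_{ij} = \sum_{k=1}^m a^{ii}_{kk}\,\text{---}$ wait, more precisely $(J_r^T J_r)_{ij} = \sum_{k=1}^m (J_r)_{ki}(J_r)_{kj} = \sum_{k=1}^m a^{ij}_{kk}$. So the first step is to write $B - J_r^TJ_r$ as a manifestly PSD matrix and then handle the remaining second-order term $-\sum_{k=1}^m r_k H_{r_k}$ separately.

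First I would split the double sum defining $B$ into the diagonal-within-the-constrained-block part $\sum_{k=1}^m a^{ij}_{kk}$, which is exactly $(J_r^TJ_r)_{ij}$, and the off-block/off-diagonal remainder. The remainder is $\sum_{(s,t)\neq(k,k),\,1\le k\le m} a^{ij}_{st}$; grouping the terms $(s,t)$ and $(t,s)$ together, each unordered pair $\{s,t\}$ with $s\neq t$ (or with $s=t>m$) contributes a rank-one PSD term $v_{st} v_{st}^T$ where $(v_{st})_i = q_s^T A_i q_t$ (after suitably accounting for the factor of $2$ from symmetry and the positive weight). Hence $B - J_r^TJ_r \succeq 0$. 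This part is routine bookkeeping.

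The crux is comparing the second-order term: I need $B - J_r^TJ_r \succeq \sum_{k=1}^m r_k H_{r_k}$, i.e., the PSD matrix I just extracted must dominate $\sum_k r_k H_{r_k}$ in the Loewner order. Here is where the specific structure of $H_{r_k}$ in \eqref{eqn:Hessian} matters: $(H_{r_k})_{ij} = 2\sum_{t:\lambda_t\neq\lambda_k} \frac{a^{ik}_{tk} a^{jk}_{tk}}{\lambda_k - \lambda_t}$, and recall $r_k = \lambda_{\rho_k}(x) - \lambda_k^*$ so that $\lambda_k - (\lambda_k - r_k) = \lambda_k^*$... the plan is to reorganize $\sum_{k=1}^m r_k H_{r_k}$ as a sum over pairs $(k,t)$ with $k\le m$ of rank-one terms $\frac{2 r_k}{\lambda_k - \lambda_t} u_{kt}u_{kt}^T$ where $(u_{kt})_i = q_t^T A_i q_k$, and then match these against the corresponding rank-one PSD terms $u_{kt}u_{kt}^T$ (with a clean positive coefficient) appearing in $B - J_r^TJ_r$. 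For each such pair the question reduces to a scalar inequality: the coefficient coming from $B$ must be at least $\frac{2 r_k}{\lambda_k - \lambda_t}$. Using $r_k = \lambda_k - \lambda_k^*$ and the fact that the permutation $\rho$ is chosen optimally (Problem \ref{prob: rho}), one expects an inequality of the form $\lambda_k^* \le \lambda_t$ or $\lambda_k^* \ge \lambda_t$ depending on whether $t$ lies above or below the constrained block, which is exactly what forces the scalar inequality to hold.

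The main obstacle I anticipate is precisely this last bookkeeping: correctly pairing the rank-one contributions of $B - J_r^TJ_r$ with those of $\sum_k r_k H_{r_k}$ (the index ranges differ — $H_{r_k}$ sums over \emph{all} $t$ with $\lambda_t\neq\lambda_k$, including $t\le m$, whereas the "extra" terms in $B$ beyond $J_r^TJ_r$ are indexed differently), and then verifying the scalar inequality using the optimality of $\rho$ together with the ordering $\lambda_1^*\le\cdots\le\lambda_m^*$. Degenerate eigenvalues ($\lambda_t = \lambda_k$) must be excluded consistently, but they cause no trouble since $H_{r_k}$ already omits them. Once the pairing and the scalar inequality are in place, $B - H_F = (B - J_r^TJ_r) - \sum_k r_k H_{r_k}$ is exhibited as a nonnegative combination of rank-one PSD matrices, completing the proof.
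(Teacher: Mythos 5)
Your overall strategy---expanding $B$ and $H_F$ in the orthonormal eigenbasis of $A(x^k)$, recognising $J_r^TJ_r$ as the diagonal part of the constrained block, and comparing coefficients of the rank-one matrices $u_{kt}u_{kt}^T$ with $(u_{kt})_i = q_t^TA_iq_k$---is essentially the paper's own (there the same bookkeeping is packaged as $v^TBv=\|\tilde A(v)\|_F^2\geq\|Q_1^T\tilde A(v)Q_1\|_F^2$ with $\tilde A(v)=\sum_i v_iA_i$). However, the step you defer as ``the main obstacle'' is the crux of the lemma, and the scalar inequality you propose for it is false. For a pair $k,t\leq m$ inside the constrained block, the coefficient supplied by $B-J_r^TJ_r$ is $2$, while you ask it to dominate $\frac{2r_k}{\lambda_k-\lambda_t}$ term by term; the inequality $\frac{r_k}{\lambda_k-\lambda_t}\leq 1$ fails in general (take $\lambda_k^*=0$ matched to $\lambda_k=1$ and $\lambda_t=0.5$: the ratio is $2$). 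Moreover, the justification you sketch---optimality of $\rho$ forcing $\lambda_k^*\leq\lambda_t$ or $\lambda_k^*\geq\lambda_t$ according to whether $t$ lies above or below the constrained block---only concerns indices $t>m$, which, as the Hessian formula \eqref{eqn:Hessian} is stated (its inner sum runs only over $t\leq m$), do not even occur; the within-block pairs are the entire difficulty, and they are exactly what your sketch leaves unresolved.

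The missing idea is to symmetrise before comparing: group the $(k,t)$ and $(t,k)$ contributions of $\sum_k r_kH_{r_k}$, so the combined coefficient on $(q_t^T\tilde A(v)q_k)^2$ is $\frac{2(r_k-r_t)}{\lambda_k-\lambda_t}=2\left(1-\frac{\lambda_k^*-\lambda_t^*}{\lambda_k-\lambda_t}\right)$, and then use that $\frac{\lambda_k^*-\lambda_t^*}{\lambda_k-\lambda_t}\geq 0$ because both the prescribed values $\lambda_1^*\leq\cdots\leq\lambda_m^*$ and the matched eigenvalues $\lambda_{\rho_1}\leq\cdots\leq\lambda_{\rho_m}$ are sorted in the same order---this monotonicity of the optimal matching, not the block-separation inequalities you invoke, is where the structure of Problem \ref{prob: rho} enters. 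With the symmetrised bound $2\left(1-\frac{\lambda_k^*-\lambda_t^*}{\lambda_k-\lambda_t}\right)\leq 2$ in hand, your decomposition of $B-J_r^TJ_r$ into rank-one positive semidefinite pieces (with the $\lambda_t=\lambda_k$, $t\neq k$ terms simply discarded as nonnegative leftovers) closes the argument exactly as in the paper's proof.
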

\begin{proof}
Define $\tilde{A}(x) = A(x) - A_0$. To prove that $B-H_F$ is positive semi definite we will show that $v^TBv\geq v^TH_Fv$ for all $v\in\mathbb R^\ell$. First we expand $v^TBv$ as
\begin{align}
    v^TBv   &= \sum^\ell_{i=1} \sum^\ell_{j=1} v_iv_jB_{ij} = \operatorname{Tr}(\sum^\ell_{i=1} \sum^\ell_{j=1} v_iv_jA_iA_j) =\operatorname{Tr}(\sum^\ell_{i=1} v_i A_i \sum^\ell_{j=1} v_jA_j) = \left\| \tilde{A}(v) \right\|_F^2.
    \end{align}
Note that $\|Q_1^T \tilde{A}(v) Q_1 \|_F \leq \|Q_1\|_2^2\|\tilde{A}(v)\|_F = \|\tilde{A}(v)\|_F$ by \cite[problem 6.5]{higham_accuracy_2002}, so
    \begin{align}
    v^T B v &\geq \|Q_1^T \tilde{A}(v) Q_1 \|_F \label{eq. QAQQAQ} \\
    &= \sum^m_{t=1}\sum^m_{k=1} (q_k^T\tilde{A}(v)q_t)^2 \\
    &= \sum^m_{k=1}(q_k^T\tilde{A}(v)q_k)^2 + 2\sum^m_{k=1}\sum^{k-1}_{t=1}(q_t^T\tilde{A}(v)q_k)^2.\label{eq. v^TBv}
\end{align}
Now we expand $v^TH_Fv$. We use equation \eqref{eqn:Hessian} and define $S = \sum^m_{k=1} r_k H_{r_k}$ for brevity, and obtain,
\begin{align}
    v^TH_Fv & = v^T(J_r^TJ_r+S)v= v^T(J_r^TJ_r)v+ v^TSv  \\
    &= v^T(J_r^TJ_r)v +2\sum^m_{k=1} \sum^m_{\substack{t=1\\\lambda_t\neq\lambda_k}}\left( \frac{\lambda_k-\lambda_k^*}{\lambda_k-\lambda_t}\sum^\ell_{i=1} \sum^\ell_{j=1}v_i \, q_t^TA_iq_k \, v_j \, q_t^TA_jq_k\right)\\
    &= v^T(J_r^TJ_r)v +2\sum^m_{k=1} \sum^m_{\substack{t=1\\\lambda_t\neq\lambda_k}} \frac{\lambda_k-\lambda_k^*}{\lambda_k-\lambda_t} \, q_t^T\tilde{A}(v)q_k \, q_t^T\tilde{A}(v)q_k\\
    &= (J_rv)^TJ_rv +2\sum^m_{k=1} \sum^{k-1}_{\substack{t=1\\\lambda_t<\lambda_k}} \frac{\lambda_k-\lambda_k^*}{\lambda_k-\lambda_t}(q_t^T\tilde{A}(v)q_k)^2+ \frac{\lambda_t-\lambda_t^*}{\lambda_t-\lambda_k}(q_k^T\tilde{A}(v)q_t)^2\\
    &= \|J_rv\|_2^2 + 2\sum^m_{k=1} \sum^{k-1}_{\substack{t=1\\\lambda_t<\lambda_k}} \left(\frac{\lambda_k-\lambda_k^*}{\lambda_k-\lambda_t}+ \frac{\lambda_t^*-\lambda_t}{\lambda_k-\lambda_t}\right)(q_t^T\tilde{A}(v)q_k)^2\\
    & = \sum^m_{k=1}((J_rv)_k)^2 + 2\sum^m_{k=1} \sum^{k-1}_{\substack{t=1\\\lambda_t<\lambda_k}} \left(1- \frac{\lambda_k^*-\lambda_t^*}{\lambda_k-\lambda_t}\right)(q_t^T\tilde{A}(v)q_k)^2\\
    &\leq \sum^m_{k=1} (q_k^T\tilde{A}(v)q_k)^2 + 2\sum^m_{k=1} \sum^{k-1}_{\substack{t=1\\\lambda_t<\lambda_k}}(q_t^T\tilde{A}(v)q_k)^2 \label{eq. v^TH_Fv}.
\end{align}
The final inequality holds because the prescribed eigenvalues and eigenvalues of the current iterate are sorted, so $(\lambda_k^* - \lambda_t^*)/(\lambda_k - \lambda_t) > 0$. Therefore, by \eqref{eq. v^TBv} and \eqref{eq. v^TH_Fv},
\begin{align*}
    v^T(B-H_F)v \geq 2\sum_{k=1}^m \sum^{k-1}_{\substack{t=1\\\lambda_t=\lambda_k}} (q_t^T\tilde{A}(v)q_k)^2 \geq 0.
\end{align*}
Since $v\in\mathbb{R}^\ell$ is arbitrary, $B \geq H_F$.
 \end{proof}
Using this lemma we can now prove Theorem \ref{thrm. Global convergence of RGD LP},  a stronger result about the convergence of the Lift and Projection method than that proved in \cite{chen_least_1996}.

\begin{proof}[Proof of Theorem \ref{thrm. Global convergence of RGD LP}]
Recall that we wish to show $F(\xx) \leq F(\x) - \frac12 \nabla F(x^k)^T B^{-1} \nabla F(x^k)$. The Taylor expansion of $\nabla F$ around $x$ is%
\begin{equation}
    F(y)-F(x) =  \nabla F(x)^T (y-x) +\frac 1 2 (y-x)^T H_F(\xi)(y-x), 
\end{equation}
where $\xi = x + t(y-x)$ for some $t \in (0,1)$. Then if we substitute $x = \x, y = \xx$ and let $p^k =\xx-\x =  -B\inv\nabla F$ then
\begin{align}
    F(\xx) - F(\x) &= \nabla F(x)^T (\xx-\x) +\frac 1 2 (\xx-\x)^TH_F(\xx-\x) \\
    & = \nabla F(\x)^Tp^k+ \frac 1 2 (p^k)^TH_Fp^k\\
    & = \nabla F(\x)^TB\inv Bp^k+ \frac 1 2 (p^k)^TH_Fp^k\label{eq. sub B}\\
    & = -(p^k)^TBp^k+ \frac 1 2 (p^k)^TH_Fp^k\\
    & \leq -(p^k)^TBp^k+ \frac 1 2 (p^k)^TBp^k \\
    &= - \frac 1 2 (p^k)^TBp^k\\
    &= - \frac12 \nabla F(x^k)^T B^{-1} \nabla F(x^k).
\end{align}
 The inequality comes from $v^T H_F v \leq v^T B v$, which follows from Lemma \ref{Lem. B-H_F posdef}.
\end{proof}
Thus the method is globally convergent -- that is the method produces a decrease in the value of the objective function, unless the iterates have converged to a stationary point of $F$. It is possible to  extend this proof further to steps of double the length.
\begin{corollary}
    The `doubled' Lift and Projection method, that is the method defined by the step $\xx = \x - 2B\inv\nabla F$, is also a descent method, in the sense that 
    \begin{equation}
        F(\xx) \leq F(\x).
    \end{equation}
\end{corollary}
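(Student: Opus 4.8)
The plan is to recycle the argument of the proof of Theorem~\ref{thrm. Global convergence of RGD LP} essentially verbatim, only tracking the extra factors produced by doubling the step. Write $p^k = -B\inv\nabla F(\x)$ for the ordinary Lift and Projection step, so that the doubled step satisfies $\xx - \x = 2p^k$, and note that $Bp^k = -\nabla F(\x)$.

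First I would write down the exact second-order Taylor expansion of $F$,
\begin{equation}
    F(\xx) - F(\x) = \nabla F(\x)^T(\xx - \x) + \tfrac12 (\xx-\x)^T H_F(\xi)(\xx - \x),
\end{equation}
where $\xi = \x + t(\xx - \x)$ for some $t\in(0,1)$. Substituting $\xx - \x = 2p^k$ turns the linear term into $2\nabla F(\x)^T p^k = -2(p^k)^T B p^k$ (using $\nabla F(\x) = -Bp^k$) and the quadratic term into $\tfrac12 (2p^k)^T H_F(\xi)(2p^k) = 2(p^k)^T H_F(\xi) p^k$. Then I would apply Lemma~\ref{Lem. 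B-H_F posdef}, i.e.\ $v^T H_F v \leq v^T B v$ for all $v$, to bound the quadratic term by $2(p^k)^T B p^k$, so that
\begin{equation}
    F(\xx) - F(\x) \leq -2(p^k)^T B p^k + 2(p^k)^T B p^k = 0,
\end{equation}
which is exactly the claim. In contrast to Theorem~\ref{thrm. Global convergence of RGD LP}, the cancellation is exact at the boundary, so only a non-strict inequality is obtained: equality holds precisely when $(p^k)^T\bigl(B - H_F(\xi)\bigr)p^k = 0$.

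The only delicate point — and it is already implicit in the proof of Theorem~\ref{thrm. Global convergence of RGD LP} — is that Lemma~\ref{Lem. B-H_F posdef} is stated at an iterate whose eigenvalues are ordered consistently with the prescribed $\lambda_1^*,\dots,\lambda_m^*$, while here the Taylor remainder is evaluated at an intermediate point $\xi$ on the \emph{longer} segment joining $\x$ and $\xx$. Since $B$ is independent of the base point, and the ordering enters the lemma's proof only through the sign condition $(\lambda_k^*-\lambda_t^*)/(\lambda_k-\lambda_t)\geq 0$, the bound $H_F(\xi)\leq B$ persists as long as this sign condition does along the segment. I expect securing (or explicitly hypothesising) this to be the main obstacle; the remainder of the argument is a straightforward rescaling of the earlier computation.
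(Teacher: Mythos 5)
Your argument is correct and is essentially the paper's own proof: the exact second-order Taylor expansion, substitution of the doubled step, and the bound $v^TH_Fv\leq v^TBv$ from Lemma~\ref{Lem. B-H_F posdef}, differing only in that you keep $p^k=-B\inv\nabla F$ and carry factors of $2$ rather than writing $p=-2B\inv\nabla F$ directly. The subtlety you flag about the Hessian being evaluated at an intermediate point $\xi$ is real but applies verbatim to the paper's proofs of Theorem~\ref{thrm. Global convergence of RGD LP} and of this corollary, so it is not a gap specific to your argument.
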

\begin{proof}
    This follows from the above proof, with $p = -2B\inv\nabla F$:
\begin{align}
    F(\xx) - F(\x)  & = \frac 1 2 \nabla F(\x)^T2B\inv Bp+ \frac 1 2 p^TH_Fp\\
    & = -\frac 1 2p^TBp+ \frac 1 2 p^TH_Fp,\\
    & = \frac 1 2 p^T( H_F-B)p \leq 0   
\end{align}
again because $B-H_F$ is positive semidefinite, by Lemma \ref{Lem. B-H_F posdef}.
\end{proof}

\section{Numerical Experiments}\label{sec. Numerical Experiments} \label{sec. rho}

In this paper we have discussed solving the least squares inverse eigenvalue problem, where there are $m$ prescribed eigenvalues of an element of an $\ell$-dimensional space of $n\times n$ symmetric matrices. All of these examples can be reproduced using code available in the GitHub repository \url{https://github.com/AlbanBloorRiley/RGD_LP}.

In Section \ref{sec. LP} we showed that Algorithms \ref{alg. LP} and \ref{alg. LP as GD} produce the same sequence of steps, but this does not mean that the algorithms are necessarily identical in practice. The key advantage of Algorithm \ref{alg. LP as GD} is that only a partial eigendecomposition associated with the solution to Problem \ref{prob: rho} (loosely speaking, the closest eigenvalues to $\lambda_1^*,\ldots,\lambda_m^*$). However, it is not so simple in general. Specialist eigensolvers such as FEAST \cite{polizzi_density-matrix-based_2009}, Jacobi-Davidson \cite{sleijpen_jacobi--davidson_2000} and Stewart's Krylov--Schur algorithm \cite{stewart_krylov--schur_2002} (used in MATLAB's \texttt{eigs} function) can be used to compute a subset of the eigenvalues based on a criterion, such as: eigenvalues within an interval, eigenvalues with smallest or largest (in value or magnitude), or eigenvalues closest to a given number. If more information is known about the presribed eigenvalues, such as that they are the smallest $m$ eigenvalues of the matrix, then \texttt{eigs} can be used reliably for this partial eigendecomposition. Otherwise, we may still need to compute a full eigendecomposition if we insist on finding the closest eigenvalues to $\lambda_1^*,\ldots, \lambda_m^*$. In this worst case scenario, Algorithm \ref{alg. LP as GD} is still cheaper to compute than Algorithm \ref{alg. LP} because the calculation of $Z^k$ is unnecessary. 

In Examples 2, 3 and 4, we consider the algorithm we denote by \emph{RGD LP Min} that performs Algorithm \ref{alg. LP as GD} in which the partial eigendecomposition step computes the $m$ eigenvalues that are smallest (in value), rather than those that solve Problem \ref{prob: rho}. This is equivalent to setting $\rho_i = i$ and is much faster to compute. This scenario is not uncommon in applications.

Another advantage of using algorithms such as FEAST, Jacobi--Davidson and Stewart's Krylov--Schur algorithm is that sparsity can be taken advantage of, whereas classical full eigensolvers cannot avoid fill-in.

 \begin{example}\label{Ex. toeplitz}

The first example we will look at is an underdetermined Symmetric Toeplitz inverse eigenvalue problem. Define the basis matrices as $A_0 = 0$ and 
\begin{equation}
    {A_k}= \sum_{|i-j| = k} e_ie_j^T \in \mathbb R^{5000\times5000} . 
\end{equation}
This example will be an underdetermined system with $\ell = 40 $ basis matrices and $m = 20$ prescribed eigenvalues $\Lambda^* = (-110, -109.8, -109.6, \dots, -106.2)^T$ and the initial guess be $x_0 = (1,\dots,1)^T$. The  stopping criterium for all methods is set to  $\|\xx-\x\| < 0.0001$. The computation cost is summarised in Table \ref{tab: toeplitz}. As can be seen in the table all methods require the same number of iterations. Clearly both of the Riemannian Gradient Descent methods are significantly faster compared to the original method, with the fastest speedup achieved by the RGD LP Min algorithm. 

    \begin{table}
     \centering
     \caption{Computational cost for Example \ref{Ex. toeplitz}, with stopping tolerance $\epsilon = 0.0001$}
     \label{tab: toeplitz}
     \begin{tabular}{c*{4}{d{3.3}}}
     \toprule
           \mc{Algorithm} & \mc{No. Iterations} & \mc{CPU Time (seconds)} & \mc{Time per Iteration}\\\hline
          RGD LP Min &   20   &  6.4 & 0.32 \\
          RGD LP          &   20   &  185.2 & 9.26\\
          LP              &   20   &  475.82 & 23.79 \\\toprule
          \end{tabular}
     
 \end{table}

 \end{example} 
 \begin{example}\label{Ex. Mn12}

The rest of examples arise from inelastic neutron scattering (INS) experiments. INS is a spectroscopic technique used to measure the magnetic excitations in materials with interacting electron spins, such as single ions or molecular-based magnet. The  experiments are able to measure the energy between quantum spin states, these energy are then associated with the eigenvalues of the Hamiltonian matrix that describes the quantum spin dynamics of the compound in question \cite{furrer_magnetic_2013,baker_neutron_2012, baker_spectroscopy_2014}. This information is crucial in understanding quantum phenomena and potentially can help  utilise electronic quantum spins in new quantum applications such as information sensing and processing. The particular parametrised inverse eigenvalue problem that arises from an INS experiment is determined by the Hamiltonian model that is used to describe the spin system, and the eigenvalues that have been found experimentally. It is important to note here that the experiment does not actually measure the eigenvalues directly -- it is the differences between the eigenvalues. Thus we add an extra parameter to the system, $A_{\ell+1} = I$, to find the `ground state' of the system, that is the value of the smallest eigenvalue. As a consequence of these experiments being done at very cold temperatures it is always the lowest energy transitions that are found, which thus correspond to the smallest eigenvalues. Note that many of the  the basis matrices used to model the Hamiltonian are highly sparse, as shown in Example \ref{Ex. Cr6} Figure \ref{fig: SpyPlots}. Further numerical examples solving inverse eigenvalue problems arising in INS by a deflated Gauss--Newton method can be found in \cite{bloor_riley_deflation_2025}.

The first INS example we will look at is that of Manganese-12-acetate. This particular molecule became important when it was discovered that it acts like a nano-sized magnet with a molecular magnetic coercivity and the identification of quantum tunnelling of magnetisation see \cite{friedman_macroscopic_1996, sessoli_magnetic_1993}. The spin Hamiltonian matrix of this system is a $21\times21$ matrix  and and can be modelled with 4 basis matrices \cite{bircher_transverse_2004}
\begin{equation}\label{Ham}
    A_1 = O^0_2,\, A_2 = O^0_4,\, A_3 = O^2_2,\, A_4 = O^4_4 \, \in \mathbb R ^{21\times 21}
\end{equation}
where  $O_k^q$ are Stevens operators, see \cite{gatteschi_molecular_2006} and \cite{rudowicz_generalization_2004} for more general details on Stevens operators. Recall that the ground state basis matrix (as well as the ground state matrix $A_{5} = I$) is also required. These operators, calculated using the EasySpin MATLAB package \cite{stoll_easyspin_2006},  are defined in this case with a spin of $S = 10$, as
\begin{align*}
O_2^0 &= 3S_z^2 - X\\
O_2^2 &= \frac 1 2 (S_+^2+ S_-^2)\\
O_4^0 &= 35S_z^4 - (30X-25)S^2_z +(3X^2-6X)\\
O_4^4 &= \frac 1 2 (S_+^4+ S_-^4)
\end{align*}
where $X = S(S+1)I\in \mathbb R^{S(S+1)\times S(S+1)}$, and for $j = 1:21$ we have
\begin{align*}
    (S_z)_{j,j} \quad&= (S + 1-j)\\     (S_+)_{j,j+1} &=  \sqrt{j(2S+1-j)}\\
    (S_-)_{j+1,j} &=  \sqrt{j(2S+1-j)}.
\end{align*}
This system is overdetermined with all eigenvalues prescribed ($m=n=21$) and only $\ell=5$ parameters. The experimental eigenvalues are not given in \cite{bircher_transverse_2004} so we simulated them using the solution: $x^*_1=[-4594,-0.67,-0.7737,164.41]$ Hz using EasySpin (in fact there are four solutions described in the paper). The initial condition used was $x_0 = [-1000,1,1,1,0]$ Hz and all methods converged to the alternate solution $x^*_2 =[-4594,-0.67,1.2256,130.24]$ Hz. The summary of the computational cost of each method can be found in Table \ref{tab: Mn12} the time was averaged over $100$ iterations of each method from the same initial guess. Again the RGD LP methods are faster than LP, roughly twice as fast. In this case since all eigenvalues are prescribed $\rho$ is  known a priori, $\rho_i = i$, however RGD LP still calculates this from scratch which accounts for the extra $1.5\times10^{-5}$ seconds per iteration compared to RGD LP Min. 

 \begin{table}
     \centering
     \captionof{table}{Computational cost for Example \ref{Ex. Mn12}, with stopping tolerance $\epsilon = 1\times10^{-8}$ Hz}
     \label{tab: Mn12}
     \begin{tabular}{cccc}
          \toprule \mc{Algorithm} & \mc{No. Iterations} & \mc{CPU Time (seconds)}  & \mc{Time per Iteration}\\\hline
          RGD LP Min &  $ 136 $  & $ 0.011 $& $8.1\times 10^{-5} $ \\
          RGD LP          &   $136$   &  $0.013 $& $9.6\times 10^{-5}$   \\
          LP              &   $136$   &  $0.022 $& $1.6\times 10^{-4}$   \\\toprule
          \end{tabular}   
 \end{table}
\end{example} 
\begin{example}\label{Ex. Cr6}

The next example is a finite antiferromagnetic spin chain composed of six exchange coupled $S=\frac32$ Cr$3^+$ ions. The hamiltonian model for this example only requires two Steven's operators:
\begin{equation}
    A_1 = O^0_2, \, A_2 = O^2_2 \,  \in \mathbb R ^{4096\times4096}.
\end{equation}
However, because it contains more than one effective spin centre, each with a spin of $S = \frac32$,  the operators for the whole system are defined as Kronecker products of the operators for one spin centre. It is also necessary to include an electron-electron exchange operator that  includes the interaction terms between neighbouring spin centres (the contribution of all other interactions is negligible):
\begin{equation}
    A_3 =  \sum_{\text{neighbours}} S^i_xS^j_x + S^i_yS^j_y + S^i_zS^j_z \,  \in \mathbb R ^{4096\times4096}
\end{equation}
where $S_z^i$ is the $S_z$ spin operator for the $i$th spin centre,  defined by 
\begin{equation}
    S_z^i = I_{4}^{(i-1)\otimes} \otimes S_z \otimes I_{4}^{(N-i)\otimes}
\end{equation}
and similarly for $S_x$ and $S_y$ where the local operators are defined as 
\begin{align*}
    (S_x)_{j,j+1} &=(S_x)_{j+1,i}   =\frac \hbar 2 \sqrt{j(2S+1-j)}\\
    -(S_y)_{j,j+1} &= (S_y )_{j+1,i} = \frac {i\hbar} 2 \sqrt{j(2S+1-j)}.
\end{align*}
All of these operators are highly sparse matrices, as can be seen from their spyplots in Figure \ref{fig: SpyPlots}.
\begin{figure}
    \centering
    \includegraphics[width=0.8\linewidth]{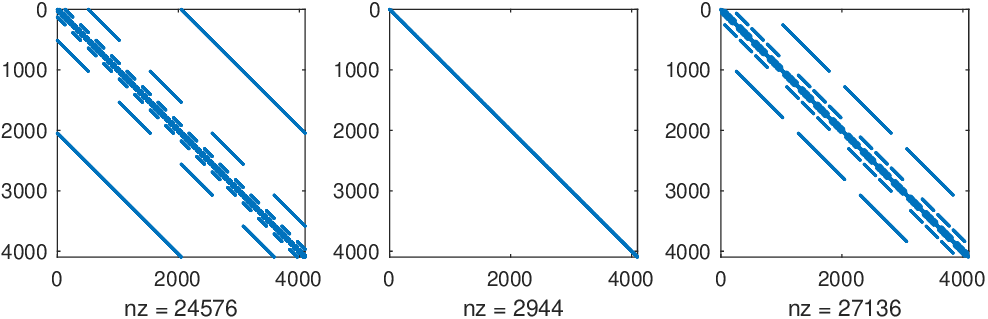}
    \caption{Spyplots showing the sparsity of $A_1,A_2$ and $A_3$ for Example \ref{Ex. Cr6}}
    \label{fig: SpyPlots}
\end{figure}
This is an overdetermined system with $m=21$ eigenvalues found experimentally and $ \ell=4$ parameters. The eigenvalues were simulated using EasySpin with the parameters given in \cite{baker_varying_2011} as the solution: $x^* = [1692.5,-3304.4,353000]$ Hz, and ground state $5211700$. To capture the behaviour of the methods across the whole solution space we calculated the iterations on a logarithmic grid from $1e3$ to $1e7$ (but with the second parameter negative) with a total of $625$ points. The timings for the calculation of all these steps is given in table \ref{tab: Cr6}.

   \begin{table}
     \centering
          \caption{Computational cost for Example \ref{Ex. Cr6}}
     \label{tab: Cr6}
     \begin{tabular}{c c c}
          \toprule \mc{Algorithm}  & \mc{CPU Time (seconds)}& \mc{Average Time per Iteration}  \\\hline
          RGD LP Min &  143   &  0.23 \\
          RGD LP          &  2710  & 4.34  \\
          LP              &  3730 &   5.97\\\toprule
          \end{tabular}

 \end{table}
By comparing this to Example \ref{Ex. toeplitz} it can be seen that the magnitude of the speedup of RGD Smallest compared to RGD LP is correlated with the size of the basis matrices. 
\end{example} 
 \begin{example}\label{Ex. Mn6}
The last INS example is that of Mn$_6$. This sample is modelled using the $O_2^0$ Steven's operator for the two MnIII ions and four electron-electron exchange operators between nearest neighbours. Due to the size of the basis matrices in this example, $A(x)\in\mathbb R^{32400\times32400}$, it is intractable to perform a full eigendecomposition so only the  RGD LP Min method is used.

There were 16 eigenvalues measured experimentally: $\lambda^* = [0,0.342,1.428,1.428,2.621,2.621,2.621,$ $3.417,3.417,5.6,5.6,5.6,5.6,5.6,6.097,6.097]\times10^5 \text{ Hz}$, it is therefore an overdetermined system with $m=16 > 5=\ell$. The initial guess in this case is given by $ x_0 = [100,100,100,100,100, 2\times10^7]$ Hz.
 \begin{table}
     \centering
          \caption{Computational cost for Example \ref{Ex. Mn6}.}
     \label{tab: Mn6}
     \begin{tabular}{cccc}
          \toprule Algorithm & No. Iterations & CPU Time (seconds) & \mc{Average Time per Iteration}  \\\hline
          RGD LP Min &       600    &   238 & 0.4\\\toprule
          \end{tabular}

 \end{table}


The linear convergence of the method for this example can be seen in Figure \ref{fig: Mn6 Convergence}.

\begin{figure}
\centering
\includegraphics[width=0.5\linewidth]{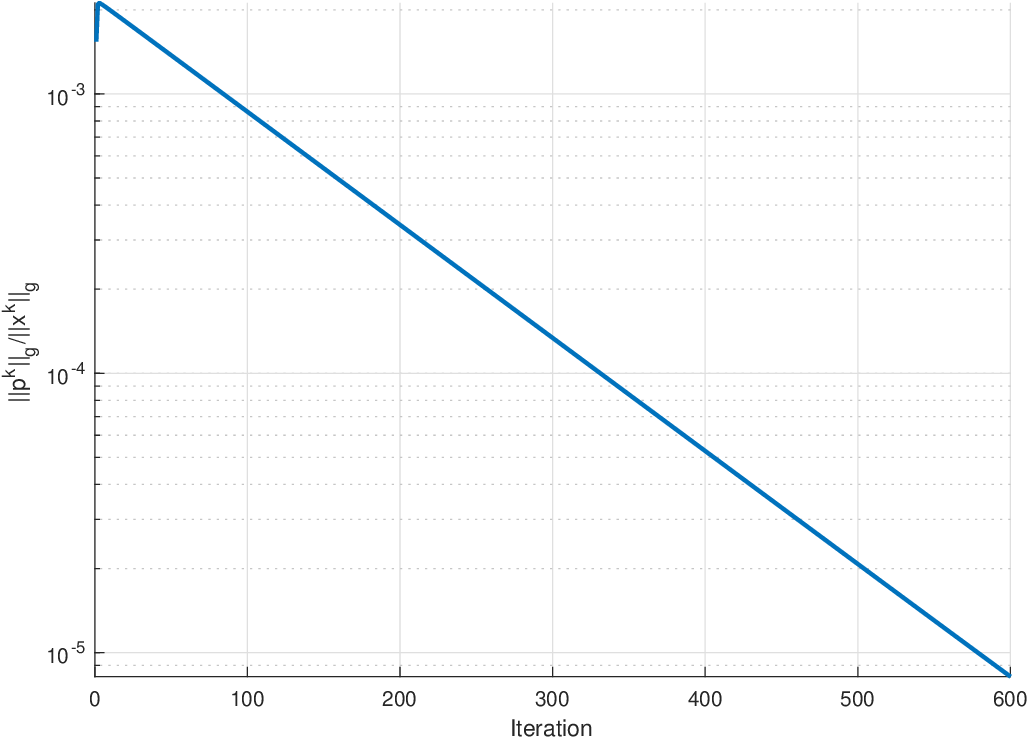}
    \caption{Convergence plot of the RGD LP Min method for Example  \ref{Ex. Mn6}}
    \label{fig: Mn6 Convergence}
\end{figure}
\end{example} 
\section{Conclusion}
In Sections \ref{sec. LP} and \ref{sec. Riemannian GD} we showed that the Lift and Projection method, described by Chen and Chu in \cite{chen_least_1996}, is equivalent to the Riemannian gradient descent method applied to the manifold $\mathbb R^\ell$  equipped with the metric induced by the map $A(x)$. Note that this map is what describes the structural constraints of the inverse problem, compared to the spectral constraints described by $\Lambda^*$.

The new interpretation of the method also allows for a more general function to minimised, for example it is possible to include additional least squares constraints. Each iterate of the new formulation is also quicker to compute,  especially in the case when only a small subset of eigenvalues is prescribed that are all computable as part of a partial eigendecomposition -- such as the case when only the smallest/largest in real value/magnitude eigenvalues are required. 

In Section \ref{sec. Global Convergence} an important relationship was established between the matrix $B$ from \eqref{eqn. Def B} and the  Hessian of $F$. It was then proved that the Lift and Projection method is globally convergent, a stronger result  than that proved in \cite{chen_least_1996}. Section \ref{sec. Numerical Experiments} contained several numerical examples that even when $\rho$ has to be calculated at each iteration  showed the RGD LP is a faster implementation of the Lift and Projection method. It was also shown that when the number of prescribed eigenvalues is small, and the permutation  $\rho$ is know  the RGD LP method is much faster; the cost scales roughly linearly with $n\times \mathrm{nnz}$, where $\mathrm{nnz}$ is the number of nozero entries of $A(x)$. While this speedup is not possible in the general case, many real world problem are of this form -- for example the INS examples discussed in this paper. 

\section*{Acknowledgements}
\sloppy ABR thanks the University of Manchester for a Dean’s Doctoral Scholarship. MW thanks the Polish National Science Centre (SONATA-BIS-9), project no. 2019/34/E/ST1/00390, for the funding that supported some of this research. 
\bibliographystyle{siamplain}
\bibliography{references}

\begin{thebibliography}{10}

\bibitem{baker_spectroscopy_2014}
{\sc M.~Baker}, {\em Spectroscopy methods for molecular nanomagnets}, in Structure and Bonding, Springer Berlin Heidelberg, 2014, pp.~231--291, \url{https://doi.org/10.1007/430_2014_155}, \url{https://link.springer.com/10.1007/430_2014_155}.

\bibitem{baker_neutron_2012}
{\sc M.~Baker and H.~Mutka}, {\em Neutron spectroscopy of molecular nanomagnets}, The European Physical Journal Special Topics, 213 (2012), pp.~53--68, \url{https://doi.org/10.1140/epjst/e2012-01663-6}, \url{http://link.springer.com/10.1140/epjst/e2012-01663-6}.

\bibitem{baker_varying_2011}
{\sc M.~L. Baker, A.~Bianchi, S.~Carretta, D.~Collison, R.~J. Docherty, E.~J.~L. McInnes, A.~McRobbie, C.~A. Muryn, H.~Mutka, S.~Piligkos, M.~Rancan, P.~Santini, G.~A. Timco, P.~L.~W. Tregenna-Piggott, F.~Tuna, H.~U. G{\"u}del, and R.~E.~P. Winpenny}, {\em Varying spin state composition by the choice of capping ligand in a family of molecular chains: detailed analysis of magnetic properties of chromium(iii) horseshoes}, Dalton Transactions, 40 (2011), p.~2725, \url{https://doi.org/10.1039/c0dt01243b}, \url{http://xlink.rsc.org/?DOI=c0dt01243b}.

\bibitem{bircher_transverse_2004}
{\sc R.~Bircher, G.~Chaboussant, A.~Sieber, H.~U. G{\"u}del, and H.~Mutka}, {\em Transverse magnetic anisotropy in mn 12 acetate: Direct determination by inelastic neutron scattering}, Physical Review B, 70 (2004), p.~212413, \url{https://doi.org/10.1103/PhysRevB.70.212413}, \url{https://link.aps.org/doi/10.1103/PhysRevB.70.212413}.

\bibitem{bloor_riley_deflation_2025}
{\sc A.~Bloor~Riley, M.~Webb, and M.~L. Baker}, {\em Deflation {Techniques} for {Finding} {Multiple} {Local} {Minima} of a {Nonlinear} {Least} {Squares} {Problem}}, Feb. 2025, \url{https://doi.org/10.48550/arXiv.2409.14438}, \url{http://arxiv.org/abs/2409.14438} (accessed 2025-04-07).
\newblock arXiv:2409.14438 [math].

\bibitem{boumal_introduction_2023}
{\sc N.~Boumal}, {\em An introduction to optimization on smooth manifolds}, Cambridge University Press, Cambridge ; New York, NY, 2023.

\bibitem{brockett_dynamical_1991}
{\sc R.~Brockett}, {\em Dynamical systems that sort lists, diagonalize matrices, and solve linear programming problems}, Linear Algebra and its Applications, 146 (1991), pp.~79--91, \url{https://doi.org/10.1016/0024-3795(91)90021-N}, \url{https://linkinghub.elsevier.com/retrieve/pii/002437959190021N}.

\bibitem{chen_least_1996}
{\sc X.~Chen and M.~T. Chu}, {\em On the least squares solution of inverse eigenvalue problems}, SIAM Journal on Numerical Analysis, 33 (1996), pp.~2417--2430, \url{https://doi.org/10.1137/S0036142994264742}, \url{http://epubs.siam.org/doi/10.1137/S0036142994264742}.

\bibitem{chu_inverse_2005}
{\sc M.~T.-C. Chu and G.~H. Golub}, {\em Inverse Eigenvalue problems: theory, algorithms, and applications}, Numerical mathematics and scientific computation, Oxford University Press, Oxford ; New York, 2005.
\newblock OCLC: ocm59877517.

\bibitem{dennis_numerical_1996}
{\sc J.~E. Dennis and R.~B. Schnabel}, {\em Numerical methods for unconstrained optimization and nonlinear equations}, Classics in applied mathematics, Society for Industrial and Applied Mathematics, Philadelphia, 1996.

\bibitem{deuflhard_newton_2011}
{\sc P.~Deuflhard}, {\em Newton Methods for Nonlinear Problems: Affine Invariance and Adaptive Algorithms}, vol.~35 of Springer Series in Computational Mathematics, Springer Berlin Heidelberg, 2011, \url{https://doi.org/10.1007/978-3-642-23899-4}, \url{https://link.springer.com/10.1007/978-3-642-23899-4}.

\bibitem{friedman_macroscopic_1996}
{\sc J.~R. Friedman, M.~P. Sarachik, J.~Tejada, and R.~Ziolo}, {\em Macroscopic measurement of resonant magnetization tunneling in high-spin molecules}, Physical Review Letters, 76 (1996), pp.~3830--3833, \url{https://doi.org/10.1103/physrevlett.76.3830}, \url{https://link.aps.org/doi/10.1103/PhysRevLett.76.3830}.

\bibitem{furrer_magnetic_2013}
{\sc A.~Furrer and O.~Waldmann}, {\em Magnetic cluster excitations}, Reviews of Modern Physics, 85 (2013), pp.~367--420, \url{https://doi.org/10.1103/RevModPhys.85.367}, \url{https://link.aps.org/doi/10.1103/RevModPhys.85.367}.

\bibitem{gatteschi_molecular_2006}
{\sc D.~Gatteschi, R.~Sessoli, and J.~Villain}, {\em Molecular nanomagnets}, Mesoscopic physics and nanotechnology, Oxford University Press, Oxford ; New York, 2006.
\newblock OCLC: ocm62133760.

\bibitem{higham_accuracy_2002}
{\sc N.~J. Higham}, {\em Accuracy and stability of numerical algorithms}, Other titles in applied mathematics, Society for Industrial and Applied Mathematics (SIAM), Philadelphia, Pa, 2nd ed.~ed., 2002, \url{https://doi.org/10.1137/1.9780898718027}.

\bibitem{lee_introduction_2018}
{\sc J.~M. Lee}, {\em Introduction to Riemannian manifolds}, vol.~176 of Graduate texts in mathematics, Springer, second edition~ed., 2018, \url{https://doi.org/10.1007/978-3-319-91755-9}.

\bibitem{nocedal_numerical_2006}
{\sc J.~Nocedal and S.~J. Wright}, {\em Numerical optimization}, Springer series in operations research, Springer, New York, 2nd ed.~ed., 2006.
\newblock OCLC: ocm68629100.

\bibitem{polizzi_density-matrix-based_2009}
{\sc E.~Polizzi}, {\em Density-matrix-based algorithm for solving eigenvalue problems}, Physical Review B, 79 (2009), p.~115112, \url{https://doi.org/10.1103/PhysRevB.79.115112}, \url{https://link.aps.org/doi/10.1103/PhysRevB.79.115112}.

\bibitem{rudowicz_generalization_2004}
{\sc C.~Rudowicz and C.~Y. Chung}, {\em The generalization of the extended stevens operators to higher ranks and spins, and a systematic review of the tables of the tensor operators and their matrix elements}, Journal of Physics: Condensed Matter, 16 (2004), pp.~5825--5847, \url{https://doi.org/10.1088/0953-8984/16/32/018}, \url{https://iopscience.iop.org/article/10.1088/0953-8984/16/32/018}.

\bibitem{sessoli_magnetic_1993}
{\sc R.~Sessoli, D.~Gatteschi, A.~Caneschi, and M.~A. Novak}, {\em Magnetic bistability in a metal-ion cluster}, Nature, 365 (1993), pp.~141--143, \url{https://doi.org/10.1038/365141a0}, \url{https://www.nature.com/articles/365141a0}.

\bibitem{sleijpen_jacobi--davidson_2000}
{\sc G.~L.~G. Sleijpen and H.~A. Van~der Vorst}, {\em A {Jacobi}--{Davidson} {Iteration} {Method} for {Linear} {Eigenvalue} {Problems}}, SIAM Review, 42 (2000), pp.~267--293, \url{https://doi.org/10.1137/S0036144599363084}, \url{http://epubs.siam.org/doi/10.1137/S0036144599363084}.

\bibitem{stewart_krylov--schur_2002}
{\sc G.~W. Stewart}, {\em A {Krylov}--{Schur} {Algorithm} for {Large} {Eigenproblems}}, SIAM Journal on Matrix Analysis and Applications, 23 (2002), pp.~601--614, \url{https://doi.org/10.1137/S0895479800371529}, \url{http://epubs.siam.org/doi/10.1137/S0895479800371529} (accessed 2025-04-03).

\bibitem{stoll_easyspin_2006}
{\sc S.~Stoll and A.~Schweiger}, {\em Easyspin, a comprehensive software package for spectral simulation and analysis in epr}, Journal of Magnetic Resonance, 178 (2006), pp.~42--55, \url{https://doi.org/10.1016/j.jmr.2005.08.013}, \url{https://linkinghub.elsevier.com/retrieve/pii/S1090780705002892}.

\bibitem{zhao_riemannian_2022-1}
{\sc Z.~Zhao, X.-Q. Jin, and T.-T. Yao}, {\em The {Riemannian} two-step perturbed {Gauss}–{Newton} method for least squares inverse eigenvalue problems}, Journal of Computational and Applied Mathematics, 405 (2022), p.~113971, \url{https://doi.org/10.1016/j.cam.2021.113971}, \url{https://linkinghub.elsevier.com/retrieve/pii/S0377042721005707} (accessed 2025-04-09).

\bibitem{zhao_riemannian_2022}
{\sc Z.~Zhao, X.-Q. Jin, and T.-T. Yao}, {\em A {Riemannian} under-determined {BFGS} method for least squares inverse eigenvalue problems}, BIT Numerical Mathematics, 62 (2022), pp.~311--337, \url{https://doi.org/10.1007/s10543-021-00874-z}, \url{https://link.springer.com/10.1007/s10543-021-00874-z} (accessed 2025-04-09).

\end{thebibliography}
\appendix
\end{document}